\newtheorem{theorem}{Theorem}[section]
\newtheorem{prop}{Proposition}
\newtheorem{lemm}{Lemma}
\newtheorem{remark}{Remark}
\newcommand{\cO}{\mathcal{O}}
\newcommand{\cC}{\mathcal{C}}
\newcommand{\e}{\epsilon}
\title{A geometric analysis of the SIR, SIRS and SIRWS epidemiological models}
\author{Hildeberto Jard\'on-Kojakhmetov{$^{1}$}, Christian Kuehn{$^{1}$},\\
	Andrea Pugliese{$^{2}$}, Mattia Sensi{$^{2}$}\\[1em]
	$^1$Technische Universit{\"a}t M{\"u}nchen (TUM) \\
	$^2$Universit\`a degli Studi di Trento \\ }
\begin{document}

\maketitle

\begin{abstract}
We study fast-slow versions of the SIR, SIRS, and SIRWS epidemiological models. The multiple time scale behavior is introduced to account for large differences between some of the rates of the epidemiological pathways. Our main purpose is to show that the fast-slow models, even though in nonstandard form, can be studied by means of Geometric Singular Perturbation Theory (GSPT). In particular, without using Lyapunov's method, we are able to not only analyze the stability of the endemic equilibria but also to show that in some of the models limit cycles arise.  We show that the proposed approach is particularly useful in more complicated (higher dimensional) models such as the SIRWS model, for which we provide a detailed description of its dynamics by combining analytic and numerical techniques.
	\end{abstract}

\textbf{Keywords:} fast-slow system, epidemic model, non-standard form, entry-exit function, bifurcation analysis, numerical continuation.

\section{Introduction}

Epidemic modelling has grown from the  pioneering 1927 article by Kermack and McKendrick \cite{KeMcK} into a wide body of theory  and applications to several diseases \cite{AndMay_bk,Heth,keeling_rohani,DiekHeesBrit,Martcheva_bk}, used also for developing appropriate control strategies.

The model by Kermack and McKendrick \cite{KeMcK} was of S-I-R type, meaning that individuals are classified as  \emph{Susceptibles} ($S$), \emph{Infected} ($I$) or \emph{Recovered} ($R$), and that the only possible transitions are $S \to I$ (new infection) and $I \to R$ (recovery with permanent immunity). As that model does not consider new births or deaths (other than because of the disease), it is appropriate for an epidemic that develops on a time-scale much faster than demographic turn-around. 
The epidemic SIR model was extended by Soper who added \cite{Soper} (constant) birth and death rates to the model, obtaining the so-called SIR endemic model, that has been extensively analysed in the following decades, especially to investigate how to explain the apparent periodicities in the notifications of childhood diseases \cite{Smi83,Kee01}. 
The SIR endemic model can be seen as the basis, over which more complex and realistic models have been built.

The difference in time-scales between  epidemic spread and demographic turnaround has been observed by several authors. Smith \cite{Smi83} introduced a small parameter $\e$ as the ratio between the average lengths of the infection period and of life; he proved that, if the contact rate is a sinusoidal function of period 1  and $\e$ is sufficiently small, a subharmonic bifurcation of a 2-periodic stable positive solution can occur. Andreasen \cite{And93} showed that, for $\e$ small enough, the endemic equilibrium is always stable in a certain class of age-dependent SIR models. Diekmann, Heesterbeek and Britton \cite{DiekHeesBrit} have exploited the fact that $\e$ is  a small parameter in an informal argument about the minimum community size in which a measles-like infection can persist. However, to our knowledge very few authors have systematically used  geometric singular perturbation theory as a tool to investigate properties of epidemic models.
We only know of the paper by Rocha \textit{et al.} \cite{Rocha} that used singular perturbation methods for the analysis of a SIRUV model for a  vector-borne epidemic.
 
Our main objective in this paper is to show that under certain assumptions of the system parameters (namely the transition rates between states), tools from Geometric Singular Perturbation Theory (GSPT) are suitable to describe the intricate dynamics that such models exhibit due to the presence of multiple time scales. 

The first part of the paper is devoted to the classical SIR and SIRS epidemic models, that we analyse in the limiting case of $\e \to 0$. For such models, it is well known that, when $R_0 > 1$, there exists a unique endemic equilibrium, which is globally asymptotically stable.\\

In the second part, we instead consider a model, named SIRWS, introduced for pertussis in  \cite{Lavi}, and partially analysed in  \cite{Dafi}. In the model it is assumed that immunity wanes in two stages: after recovering from infection individuals are totally immune, but then immune memory starts to fade: if they are challenged by the pathogen when they are in the stage of partial immunity, they recover a complete immunity; otherwise, they completely lose immunity, and re-enter the susceptible stage.
 
Our main results can be summarized as follows:
\begin{itemize}[leftmargin=*]
	\item For the fast-slow SIR and SIRS models we capture the transient behaviour from an initial introduction of the infection, and show that, when $R_0 > 1$, the dynamics leads, in the slow time-scale, to  a neighbourhood of the endemic equilibrium, see Sections \ref{SIR} and \ref{SIRS}. Then convergence to the equilibrium can be established by local methods.
	\item For the fast-slow SIRWS model, in particular,  we confirm the result obtained numerically in \cite{Dafi} that stable periodic epidemic outburst can exist. Moreover, we give a detailed description of the system parameters for which such behaviour occurs and the corresponding time scales involved, see Section \ref{sec:SIRWS}.
\end{itemize}

Our mathematical analysis is largely based on GSPT, see more details in Section \ref{sec:preliminaries}.

In such a context, it is worth mentioning that the models we study are not immediately, nor globally, in a standard singularly perturbed form, but in each model the fast-slow decomposition appears only in specific regions of the phase space, similarly to what is considered in e.g. \cite{KoSz,KuSz}. As it is usually the case in such biological models, the main difficulty for analysis is due to the loss of normal hyperbolicity of the critical manifold. To overcome this obstacle, we use here the so called entry-exit function, as presented by De Maesschalck and Schecter \cite{DeMaSch}, which gives details regarding the behaviour of an orbit in regions where the critical manifold changes its stability properties. Moreover, for the modified SIRWS system we present a combination of analytical and numerical studies regarding the dependence of the dynamics with respect to some of the parameters, and compare our results with the ones obtained in \cite{Dafi}. In particular, we focus on the interplay between life expectancy (or birth/death rate) and boosting rate, and on how different values of these parameters can give rise to damped or sustained oscillations. Finally, the novelty of our analysis is not confined to the usage of GSPT in the context of the well-known SIR model, but we also show that our techniques can be potentially used in higher dimensional systems (as the SIRWS model). This is rather important since the well-studied SIR and SIRS models often depend on Lyapunov's method to show stability of trajectories \cite{Lyap}, and it is known that Lyapunov functions are difficult to obtain. Our GSPT analysis does not require global Lyapunov functions.

The remainder of this paper is arranged as follows: in Section \ref{sec:preliminaries} we provide some necessary mathematical preliminaries which will be later used for the analysis of the models. Afterwards, we present in Section \ref{sec:models} the mathematical analysis of the SIR, SIRS, and the SIRWS epidemiological models. We finish in Section \ref{sec:conclusions} with a summary and an outlook of open-problems regarding modelling and analysis of epidemiological models with fast-slow dynamics.

\section{Preliminaries}\label{sec:preliminaries}

In the main part of this paper we study three compartment models whose dynamics evolve at distinct time scales. Therefore, we now provide a brief description of Geometric Singular Perturbation Theory (GSPT), and in particular of the entry-exit function \cite{DeMaSch}, which is fundamental in our analysis.

\subsection{Fast-slow systems}

The term ``fast-slow systems'' is commonly used to model phenomena which evolve on two (or more) different time scales \cite{bertram2017multi,Kuehn}. Often such behaviour can be described by a singularly perturbed ordinary differential equation (ODE), that is
\begin{equation}\label{eq:fen-two}
\begin{aligned}
\epsilon \dot{x}&=f(x,y,\epsilon),\\
\dot{y}&=g(x,y,\epsilon),
\end{aligned}
\end{equation}
where $x=x(\tau) \in \mathbb{R}^m$, $y=y(\tau) \in \mathbb{R}^n$, with $m,n\geq 1$, are the fast and slow variables respectively, $f$ and $g$ are functions of class $\mathcal{C}^k$, with $k$ as large as needed, and $0<\epsilon\ll 1$ is a small parameter which gives the ratio of the two time scales. Here the overdot ( $\dot{}$ ) indicates $\frac{\textnormal{d}}{\textnormal{d}\tau}$. The system~(\ref{eq:fen-two}) is formulated on the \textit{slow time} scale $\tau$. When studying fast-slow systems we often define a new \textit{fast time} $t=\tau/\epsilon$ with which \eqref{eq:fen-two} can be rewritten as
\begin{equation}\label{eq:fen-one}
\begin{aligned}
x'&=f(x,y,\epsilon),\\
y'&=\epsilon g(x,y,\epsilon),
\end{aligned}
\end{equation}%
where now the prime ( $'$ ) indicates $\frac{\textnormal{d}}{\textnormal{d}t}$. Clearly, since we simply rescaled the time variable, systems~(\ref{eq:fen-two}) and~(\ref{eq:fen-one}) are equivalent for $\epsilon> 0$.

Fast-slow systems given by \eqref{eq:fen-two}-\eqref{eq:fen-one} are said to be \emph{in standard form}. In a more general context, it is possible to have a fast-slow system given by
\begin{equation}
	 z' = F(z,\epsilon),
\end{equation}
where the time scale separation is not explicit. In fact, many biological models \cite{KoSz,KuSz}, among others, and in particular the models we study in this paper are in such non-standard form.

The main idea of GSPT is to consider \eqref{eq:fen-two}-\eqref{eq:fen-one} in the limit $\epsilon\to0$ and then use perturbation arguments to describe the dynamics of the full fast-slow system. The motivation behind this strategy is that one expects that the analysis of the limit systems ($\epsilon=0$) is simpler compared to the analysis of \eqref{eq:fen-two}-\eqref{eq:fen-one} with $\epsilon>0$.

Taking the limit $\epsilon\rightarrow 0$ in systems~(\ref{eq:fen-two}) and~(\ref{eq:fen-one}) yields, respectively
\begin{equation}\label{eq:fen-slow}
\begin{aligned}
0&=f(x,y,0),\\
\dot{y}&=g(x,y,0),
\end{aligned}
\end{equation}%
and
\begin{equation}\label{eq:fen-fast}
\begin{aligned}
x'&=f(x,y,0),\\
y'&=0,
\end{aligned}
\end{equation}
where \eqref{eq:fen-slow} is called \textit{reduced subsystem} (or \textit{slow subsystem}), and \eqref{eq:fen-fast} is called \emph{the layer equation} (or \textit{fast subsystem}). We note that the reduced subsystem describes a dynamic evolution constrained to the set
$$\mathcal{C}_0=\{ x \in \mathbb{R}^m, y \in \mathbb{R}^n \,|\, f(x,y,0)=0\},$$ 
which is called the \textit{critical manifold}. On the other hand, we note that $\mathcal{C}_0$ defines the set of equilibrium points of the layer equation.

Fenichel's theorems, which are the basis of GSPT, require certain assumptions on $\mathcal{C}_0$. Namely, we suppose there exists an $n$-dimensional compact submanifold $\mathcal{M}_0$, possibly with boundary, contained in $\mathcal{C}_0$. Moreover, the manifold $\mathcal{M}_0$ is assumed to be \textit{normally hyperbolic} and \textit{locally invariant}, which mean, respectively, that the eigenvalues of the Jacobian $\textnormal{D}_x f(x,y,0)|_{\mathcal{M}_0}$ are uniformly bounded away from
the imaginary axis, and that the flow can only leave $\mathcal{M}_0$ through its boundary. In such a setting, the following can be proved (see \cite{Feni1}):
\begin{theorem}
\label{FenTh1}
For $\epsilon>0$ sufficiently small, there exists a manifold $\mathcal{M}_{\epsilon}$, called \textnormal{slow manifold}, which lies $\mathcal{O}(\epsilon)$ close to $\mathcal{M}_0$, is diffeomorphic to $\mathcal{M}_0$ and is locally invariant under the flow of~(\ref{eq:fen-one}).
\end{theorem}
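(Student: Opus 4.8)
The plan is to recognize Theorem~\ref{FenTh1} as the persistence of a normally hyperbolic invariant manifold under the \emph{regular} perturbation obtained by passing to the fast time scale, and to construct $\mathcal{M}_\epsilon$ explicitly as a graph over the slow variables via an invariance equation. First I would exploit normal hyperbolicity directly: since the eigenvalues of $\mathrm{D}_x f(x,y,0)|_{\mathcal{M}_0}$ are bounded away from the imaginary axis, $\mathrm{D}_x f$ is in particular invertible along $\mathcal{M}_0$, so by the implicit function theorem $\mathcal{C}_0$ is locally the graph of a $\mathcal{C}^k$ map $x = h_0(y)$ with $f(h_0(y),y,0)=0$. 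Because $\mathcal{M}_0$ is compact and $\mathcal{M}_\epsilon$ is required to be only \emph{locally} invariant, I would modify $f$ and $g$ outside a fixed tubular neighbourhood of $\mathcal{M}_0$ by a smooth cut-off, producing a globally defined system whose corresponding manifold is overflowing invariant; persistence is proved for the modified system and then restricted back, the modification being invisible near $\mathcal{M}_0$.

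The core step is to seek $\mathcal{M}_\epsilon = \{ (x,y) : x = h(y,\epsilon) \}$ with $h(\cdot,0) = h_0$. Substituting the graph into \eqref{eq:fen-one} and imposing invariance yields the invariance equation
\begin{equation*}
f\bigl(h(y,\epsilon),y,\epsilon\bigr) = \epsilon\, \mathrm{D}_y h(y,\epsilon)\, g\bigl(h(y,\epsilon),y,\epsilon\bigr),
\end{equation*}
which at $\epsilon = 0$ reduces to $f(h_0(y),y,0)=0$ and is thus solved to leading order. I would then solve this equation for $\epsilon>0$ small by a fixed-point argument. The natural route is the Lyapunov--Perron method: normal hyperbolicity provides an exponential dichotomy splitting the normal bundle of $\mathcal{M}_0$ into stable and unstable subbundles, and the variation-of-constants formula along fast orbits turns the search for an invariant graph into a contraction on a suitable Banach space of bounded (and then $\mathcal{C}^1$) sections. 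Equivalently, one may run the graph transform induced by the time-one map of \eqref{eq:fen-one} and show that it contracts. Compactness of $\mathcal{M}_0$ is what makes all constants---the spectral gap, the dichotomy bounds, the Lipschitz norms of $f$ and $g$---uniform, so the contraction holds uniformly for $\epsilon$ below a fixed threshold.

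The decisive point, and the main obstacle, is the uniform contraction estimate: one must check that the tangential rates along $\mathcal{M}_0$---which here vanish, since every point of $\mathcal{M}_0$ is an equilibrium of the layer equation \eqref{eq:fen-fast}---are strictly dominated by the normal hyperbolic rates, so that the graph transform has Lipschitz constant less than one. This spectral-gap condition is precisely the hypothesis of normal hyperbolicity, and verifying that it survives the order-$\epsilon$ slow drift appearing on the right-hand side of the invariance equation, rather than being destroyed by it, is the delicate part of the argument. Once the fixed point $h(\cdot,\epsilon)$ is obtained, the remaining conclusions are routine: smooth dependence of the contraction on the parameter gives $h(y,\epsilon) = h_0(y) + \mathcal{O}(\epsilon)$, hence $\mathcal{M}_\epsilon$ lies $\mathcal{O}(\epsilon)$-close to $\mathcal{M}_0$; since $h(\cdot,\epsilon)$ is $\mathcal{C}^k$-close to $h_0$, the map $y \mapsto (h(y,\epsilon),y)$ is a diffeomorphism onto its image, giving $\mathcal{M}_\epsilon \cong \mathcal{M}_0$; and local invariance follows by undoing the cut-off, the flow being able to leave $\mathcal{M}_\epsilon$ only through the image of $\partial \mathcal{M}_0$. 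This recovers precisely the statement of \cite{Feni1}.
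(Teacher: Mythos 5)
The paper does not prove Theorem~\ref{FenTh1} at all: it states the result as a known theorem and defers to Fenichel \cite{Feni1}, whose standard proof is precisely the strategy you outline (reduce to a graph $x=h(y,\epsilon)$ via the invariance equation after a cut-off making the manifold overflowing invariant, then run a Lyapunov--Perron or graph-transform contraction, the spectral gap being automatic here because the tangential rates vanish on the manifold of layer equilibria while the normal rates are bounded away from zero). Your proposal is therefore correct and takes essentially the same approach as the proof the paper relies on.
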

We note that the manifold $\mathcal{M}_{\epsilon}$ is usually not unique, but all the possible choices lie $\mathcal{O}(\e^{-K/\epsilon})$-close to each other, for some $K>0$. Therefore, in most cases the choice of slow manifold $\mathcal{M}_{\epsilon}$ does not change the analytical and numerical results.

With the usual definitions for stable and unstable manifolds (see, for example, equations (6.3) in \cite{Kuehn})
\begin{align*}
W^{\textnormal{s}}(\mathcal{M}_0)=\{(x,y): \phi_t(x,y)\rightarrow \mathcal{M}_0 \hbox{ as } t\rightarrow +\infty \}, \\
W^{\textnormal{u}}(\mathcal{M}_0)=\{(x,y): \phi_t(x,y)\rightarrow \mathcal{M}_0 \hbox{ as } t\rightarrow -\infty \},
\end{align*}
where $\phi_t$ denotes the flow of system~(\ref{eq:fen-fast}), Fenichel's second theorem ensures that $W^{\textnormal{s}}(\mathcal{M}_0)$ and $W^{\textnormal{u}}(\mathcal{M}_0)$ persist under perturbation as well:
\begin{theorem}
\label{FenTh2}
For $\epsilon>0$ sufficiently small, there exist manifolds $W^{\textnormal{s}}(\mathcal{M}_{\epsilon})$ and $W^{\textnormal{u}}(\mathcal{M}_{\epsilon})$ which lie $\mathcal{O}(\epsilon)$ close to and are diffeomorphic to $W^{\textnormal{s}}(\mathcal{M}_0)$ and $W^{\textnormal{u}}(\mathcal{M}_0)$ respectively, and are locally invariant under the flow of~(\ref{eq:fen-one}).
\end{theorem}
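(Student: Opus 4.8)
The plan is to obtain Theorem~\ref{FenTh2} as the stable/unstable-manifold counterpart of Theorem~\ref{FenTh1}, deducing both from the general persistence theory of normally hyperbolic invariant manifolds. First I would append the trivial equation $\epsilon'=0$ to \eqref{eq:fen-one}, producing an autonomous flow on the extended space $(x,y,\epsilon)\in\bbR^m\times\bbR^n\times\bbR$ in which every slice $\{\epsilon=\text{const}\}$ is invariant. In the slice $\{\epsilon=0\}$ the extended flow restricts to the layer equation \eqref{eq:fen-fast}, for which each point of $\mathcal{M}_0$ is an equilibrium; hence $\mathcal{M}_0\times\{0\}$ is an invariant manifold of equilibria. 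Its tangent directions---those along $\mathcal{M}_0$ together with the appended $\epsilon$-direction---are center directions, while the complementary fast directions split, by the normal hyperbolicity hypothesis, into a stable and an unstable subbundle according to the sign of the real parts of the eigenvalues of $\textnormal{D}_x f(x,y,0)$, which are uniformly bounded away from the imaginary axis on the compact set $\mathcal{M}_0$. Thus $\mathcal{M}_0$ is a compact normally hyperbolic invariant manifold of the extended flow, and $W^{\textnormal{s}}(\mathcal{M}_0)$, $W^{\textnormal{u}}(\mathcal{M}_0)$ are precisely its local stable and unstable manifolds.

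Because $\mathcal{M}_0$ carries a boundary, I would first multiply the vector field by a smooth cut-off supported near $\mathcal{M}_0$, so that the modified manifold becomes overflowing (respectively inflowing) invariant and no orbit crosses the boundary except as prescribed; the conclusions are then read off on the interior, where the modification is inactive. The heart of the argument is the persistence of the integral manifold of the stable subbundle. I would construct $W^{\textnormal{s}}(\mathcal{M}_\epsilon)$ as a graph over the slow manifold $\mathcal{M}_\epsilon$ from Theorem~\ref{FenTh1} and the stable fast coordinate, and characterize it via the Lyapunov--Perron method: a point lies on $W^{\textnormal{s}}(\mathcal{M}_\epsilon)$ exactly when its forward orbit satisfies a variation-of-constants integral equation whose solution decays along the fast stable directions. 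The exponential dichotomy furnished by normal hyperbolicity makes the associated integral operator a contraction on a suitable Banach space of exponentially weighted orbits; its unique fixed point traces out the perturbed manifold. Smooth dependence of the fixed point on the parameter $\epsilon$ yields the $\cO(\epsilon)$-closeness and the diffeomorphism with $W^{\textnormal{s}}(\mathcal{M}_0)$, and applying the identical construction to the time-reversed flow produces $W^{\textnormal{u}}(\mathcal{M}_\epsilon)$.

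The main obstacle I expect is quantitative rather than structural: one must verify the rate (spectral-gap) conditions guaranteeing that the fixed point, and the invariant manifold it defines, are not merely Lipschitz but of class $\cC^k$ matching the regularity of $f$ and $g$. This requires the contraction rate along the center and slow directions to be strictly dominated by the exponential rates transverse to $\mathcal{M}_0$, uniformly in $\epsilon$ and over $\mathcal{M}_0$---uniformity that is exactly what the compactness of $\mathcal{M}_0$ and the uniform bound away from the imaginary axis in the normal hyperbolicity hypothesis provide. The remaining care lies in the bookkeeping of the cut-off and in passing from the extended-system manifolds back to the $\epsilon$-slices, where each intersection with $\{\epsilon=\text{const}\}$ delivers $W^{\textnormal{s}}(\mathcal{M}_\epsilon)$ and $W^{\textnormal{u}}(\mathcal{M}_\epsilon)$ as stated.
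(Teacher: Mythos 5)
The paper offers no proof of this statement: it is Fenichel's second theorem, quoted verbatim as background in Section~\ref{sec:preliminaries} and attributed to the literature (\cite{Feni1}, with the formulation of the stable/unstable sets taken from \cite{Kuehn}), so there is no in-paper argument to compare against. Your proposal is a correct and faithful reconstruction of the standard proof from that cited literature: appending $\epsilon'=0$, recognizing $\mathcal{M}_0\times\{0\}$ as a compact normally hyperbolic invariant manifold of equilibria of the extended layer flow (with the $\epsilon$-direction and the tangent directions to $\mathcal{M}_0$ as center directions), modifying the vector field near the boundary to obtain overflowing/inflowing invariance, and then running a Lyapunov--Perron fixed-point argument (equivalently, Hadamard's graph transform) whose contraction is supplied by the exponential dichotomy, with the spectral-gap conditions governing $\mathcal{C}^k$ smoothness and smooth $\epsilon$-dependence delivering the $\mathcal{O}(\epsilon)$-closeness; time reversal gives $W^{\textnormal{u}}(\mathcal{M}_\epsilon)$. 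Two small points worth keeping straight if you were to write this out in full: the sets $W^{\textnormal{s}}(\mathcal{M}_0)$, $W^{\textnormal{u}}(\mathcal{M}_0)$ as defined in the paper are global objects, while the Lyapunov--Perron construction produces local manifolds, so the stated conclusion should be read (as the phrase ``locally invariant'' signals) after restriction to a neighbourhood of $\mathcal{M}_0$ and of its local stable/unstable sets; and since $\mathcal{M}_\epsilon$ and hence $W^{\textnormal{s},\textnormal{u}}(\mathcal{M}_\epsilon)$ are not unique, the diffeomorphism and $\mathcal{O}(\epsilon)$ estimates hold for any admissible choice, consistent with the remark following Theorem~\ref{FenTh1}.
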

In practical terms, Fenichel's theorems show that for $\epsilon>0$ sufficiently small, the dynamics of \eqref{eq:fen-two}-\eqref{eq:fen-one} are a regular perturbation of the limit dynamics \eqref{eq:fen-slow}-\eqref{eq:fen-fast} within a small neighbourhood of the critical manifold.

When the manifold $\mathcal{M}_0$ is not normally hyperbolic, some more advanced tools, such as the \textit{blow-up method} (see \cite{HildeKu}), may need to be invoked. All of the systems we analyse below have one non-hyperbolic point in the biologically relevant region. Thus, in order to describe the relevant dynamics we need to use extra techniques besides Fenichel's theorems. Due to the properties of the models to be studied, it turns out that the \emph{entry-exit function} \cite{Demaaa,DeMaSch} is suitable.

\subsection{Entry-exit function} \label{entryexit}

The entry-exit function gives, in the form of a Poincar\'e map between two sections in phase space, an estimate of the behaviour of the orbits near the point in which the critical manifold changes stability (from attracting to repelling), in a class of singularly perturbed systems. Intuitively, the result can be interpreted as a ``build up'' of repulsion near the repelling part of the slow manifold, which needs to compensate the attraction which was built up near the attracting part before the orbit can leave an $\mathcal{O}(\epsilon)$ neighbourhood of the critical manifold.

More specifically, this construction applies to systems of the form 
\begin{equation}\label{eq:entex}
\begin{aligned}
x'&= f(x,y,\epsilon)x,\\
y'&=\epsilon g(x,y,\epsilon),
\end{aligned}
\end{equation}
with $(x,y)\in \mathbb{R}^2$, $g(0,y,0)>0$ and $\textnormal{sign}(f(0,y,0))=\textnormal{sign}(y)$. Note that for $\epsilon=0$, the $y$-axis consists of normally attracting/repelling equilibria if $y$ is negative/positive, respectively.
\begin{figure}[htbp]\centering
	\begin{tikzpicture}
		\node at (0,0){\includegraphics[scale=0.85]{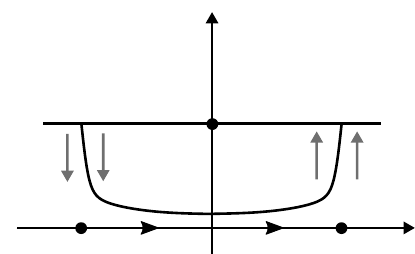}};
		\node at (2,-.9){$y$};
		\node at (0.05,1.15){$x$};
		\node at (-1.1,.2){$x=x_0$};
		\node at (-1,-1.15) {$y_0$};
		\node at ( 1.1,-1.15) {$p_\epsilon(y_0)$};
	\end{tikzpicture}
	\caption{Visualization of the entry-exit map on the line $x=x_0$}
	\label{fig:entrex}
\end{figure}\\

Consider a horizontal line $\{x=x_0\}$, which is $\mathcal{O}(\epsilon)$-close to the $y$-axis. An orbit of \eqref{eq:entex} that intersects such a line at $y=y_0<0$ (entry) re-intersects it again (exit) at $y=p_\epsilon(y_0)$, as sketched in Figure~\ref{fig:entrex}. De Maesschalck \cite{Demaaa} shows that, as $\epsilon \rightarrow 0$, the image of the return map $p_\epsilon(y_0)$ to the horizontal line $x=x_0$ approaches $p_0(y_0)$ given implicitly by
\begin{equation}\label{eq:pzero}
\int_{y_0}^{p_0(y_0)} \frac{f(0,y,0)}{g(0,y,0)}\textnormal{d}y = 0.
\end{equation}
In the following sections, the entry-exit function $p_0$ plays a crucial role in the analysis of three different epidemiological models. In particular, the analysis of the SIRWS model relies on a multi-dimensional version of the entry-exit map, provided in a recent paper by Hsu and Ruan \cite{HsuRuan}.

\section{Analysis of the SIR, SIRS and SIRWS models}\label{sec:models}

In this section we analyse three different epidemiological models, giving a short interpretation of the equations and then proceeding to use the techniques of GSPT,  especially the entry-exit function, to deduce information about the behaviour of each one.

\subsection{SIR model}\label{SIR}

We consider a SIR compartment model (presented in a similar form in \cite{Heth} and with the same underlying dynamics in \cite{KeMcK}) as depicted in Figure \ref{fig:SIR} and with corresponding equations given as in \eqref{eq:SIR1}

\begin{minipage}{.4\textwidth}\centering
		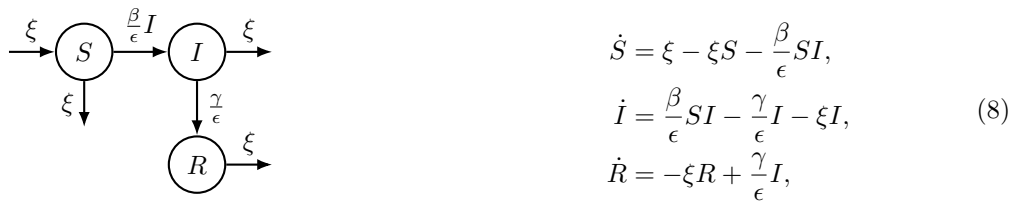
\begin{figure}[H]
			\centering
		\begin{tikzpicture}
		\node[draw,circle,thick,minimum size=.75cm] (s) at (1,0) {$S$};
		\node[draw,circle,thick,minimum size=.75cm] (i) at (2.5,0) {$I$};
		\node[draw,circle,thick,minimum size=.75cm] (r) at (2.5,-1.5) {$R$};
		\draw[-{Latex[length=2.mm, width=1.5mm]},thick] (s)--(i) node[above, midway]{$\frac{\beta}{\epsilon}I$};
		\draw[-{Latex[length=2.mm, width=1.5mm]},thick] (i)--(r) node[right, midway]{$\frac{\gamma}{\epsilon}$};
		\draw[{Latex[length=2.mm, width=1.5mm]}-,thick] (s)--++(-1,0) node[above,midway]{$\xi$};
		\draw[-{Latex[length=2.mm, width=1.5mm]},thick] (s)--++(0,-1) node[left,midway]{$\xi$};
		\draw[-{Latex[length=2.mm, width=1.5mm]},thick] (i)--++(1,0) node[above,midway]{$\xi$};
		\draw[-{Latex[length=2.mm, width=1.5mm]},thick] (r)--++(1,0) node[above,midway]{$\xi$};
		\end{tikzpicture}
			\caption{Flow diagram for~(\ref{eq:SIR1}).}
			\label{fig:SIR}
		\end{figure}
\end{minipage}
\hfill
\begin{minipage}{.5\textwidth}
	\begin{equation}\label{eq:SIR1}
	\begin{aligned}
	\dot{S}&=\xi -\xi S -\frac{\beta}{\epsilon}SI,\\
	\dot{I}&=\frac{\beta}{\epsilon}SI-\frac{\gamma}{\epsilon}I-\xi I,\\
	\dot{R}&=-\xi R+\frac{\gamma}{\epsilon}I,
	\end{aligned}
	\end{equation}
\end{minipage}\\
\\
where $S(\tau)$, $I(\tau)$, $R(\tau)$ denote the susceptible, infected and recovered proportion of the population respectively. Since the $(S,I,R)$ variables represent fractions of a population, they are assumed to be non-negative for all $\tau\geq0$. Observe that the non-negative octant of $\mathbb{R}^3$, to be denoted by $\mathbb R^3_{\geq0}$, and in particular the set $\left\{ (S,I,R)\in\mathbb R^3_{\geq0}\, | \, \, 0\leq S+I+R\leq 1 \right\}$, are invariant under the flow of~(\ref{eq:SIR1}). 

The parameter $\xi$ in \eqref{eq:SIR1} refers to the birth rate and is assumed to be equal to the death rate. Furthermore, as depicted in Figure \ref{fig:SIR}, we also assume that all individuals are born susceptible. Similarly, the parameter $\beta$ and $\gamma$ refer, respectively, to the rates at which susceptible individuals are infected and the latter are recovered. In our analysis the parameters $\xi$, $\beta$ and $\gamma$ are of order $\mathcal{O}(1)$. Note that we introduce a small positive parameter $0< \epsilon \ll 1$, which gives rise to the difference in magnitude between the large infection rate $\beta /\epsilon$, the large recovery rate $\gamma/\epsilon$ and the birth/death rate. Such a difference represents a highly contagious disease with a short infection period.

As stated above, $S(\tau)$, $I(\tau)$ and $R(\tau)$ represent proportions of the population. Consistently the plane $\{S+I+R = 1\}$ is invariant for system  \eqref{eq:SIR1} . Hence, we can assume $R=1-S-I$, which allows us to reduce~(\ref{eq:SIR1}) to
\begin{equation}\label{eq:SIR1red1}
\begin{aligned}
\dot{S}&=\xi-\xi S -\frac{\beta}{\epsilon}SI,\\
\dot{I}&=\frac{\beta}{\epsilon}SI-\frac{\gamma}{\epsilon}I-\xi I.
\end{aligned}
\end{equation}
By rescaling time, system~(\ref{eq:SIR1red1}) can also be written as
\begin{equation}\label{eq:SIR1red3}
\begin{aligned}
S'&=\epsilon\xi(1- S) -\beta SI,\\
I'&=I(\beta S-\gamma-\epsilon\xi).
\end{aligned}
\end{equation}

Note that system~(\ref{eq:SIR1red3}) is a fast-slow system in non-standard form, as it often occurs in biological models \cite{KoSz,KuSz}. Later we perform a convenient rescaling that brings \eqref{eq:SIR1red3} into a standard form.

The corresponding critical manifold is the set $\mathcal{C}_0=\{(S,I)\in \mathbb{R}^2 \, | \, I=0\}$, and the slow flow along it is given by $\dot{S}=\xi(1-S)$, which implies flow towards the point $S=1$.  In the $\epsilon\rightarrow 0$ limit, we recover from~(\ref{eq:SIR1red3}) the basic dynamics for the $(S,I)$ couple in a standard SIR system (see \cite{Heth}), namely
\begin{equation}\label{eq:SIR1redlim}
\begin{aligned}
S'&=-\beta SI,\\
I'&=I(\beta S-\gamma).
\end{aligned}
\end{equation}

In particular, it follows from linearization of \eqref{eq:SIR1redlim} along $\mathcal{C}_0$ that the critical manifold is attracting for $S<\frac{\gamma}{\beta}$, repelling for $S>\frac{\gamma}{\beta}$, and loses normal hyperbolicity at  $S=\frac{\gamma}{\beta}$.

From here on, we assume the basic reproduction number to be $R_0=\beta /\gamma >1$. This means that the disease is able to spread through the population. In particular, 
as stated in the well known next Lemma \cite{heth05,KeMcK}, the previous assumption implies that, for every initial condition $S(0)=S_0 > 1/R_0$, there exists a unique 
$S_\infty < 1/R_0$ such that a trajectory of~(\ref{eq:SIR1redlim}) with initial conditions $(S_0,I_0)$ converges towards $(S_\infty,0)$ as $t \rightarrow + \infty$.
\begin{lemm}\label{lemm:Sinf}$\Gamma(S,I)=\gamma \ln(S)-\beta (S+I)$ is a constant of motion for system~(\ref{eq:SIR1redlim}), and all its orbits in the first quadrant 
are heteroclinic to two points on the $S$-axis.
\end{lemm}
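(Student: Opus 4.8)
The plan is to verify that $\Gamma$ is conserved by direct differentiation along trajectories, and then to use the level sets of $\Gamma$ together with the known nullcline structure of \eqref{eq:SIR1redlim} to deduce the heteroclinic behaviour. First I would compute $\dot{\Gamma}$ along solutions of \eqref{eq:SIR1redlim}. Using the chain rule,
\begin{equation*}
\dot{\Gamma} = \frac{\partial \Gamma}{\partial S}\,S' + \frac{\partial \Gamma}{\partial I}\,I' = \left(\frac{\gamma}{S}-\beta\right)(-\beta SI) + (-\beta)\,I(\beta S-\gamma).
\end{equation*}
Expanding the first product gives $(-\beta\gamma I + \beta^2 S I)$ and the second gives $(-\beta^2 S I + \beta\gamma I)$; these cancel identically, so $\dot\Gamma \equiv 0$ and $\Gamma$ is a constant of motion. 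This is the routine part of the argument.

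The geometric part requires more care. Each orbit in the open first quadrant lies on a level set $\{\Gamma(S,I)=c\}$. Along such a level set I would solve for $I$ as a function of $S$: since $\beta I = \gamma\ln S - \beta S - c$, the curve is the graph $I(S) = (\gamma\ln S)/\beta - S - c/\beta$, which is defined for $S>0$ and is strictly concave in $S$ (its second derivative is $-\gamma/(\beta S^2)<0$). The function $I(S)$ has a unique interior maximum at $S=\gamma/\beta$, tends to $-\infty$ as $S\to 0^+$ and as $S\to+\infty$, so for the relevant values of $c$ the set $\{I(S)\ge 0,\ S>0\}$ is a bounded interval $[S_-,S_+]$ whose endpoints satisfy $I(S_\pm)=0$, i.e. they lie on the $S$-axis. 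Thus each orbit is confined between two points on the $S$-axis, and since trajectories in the quadrant are bounded, the $\omega$- and $\alpha$-limit sets must be contained in the set of equilibria.

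Next I would identify the limiting equilibria. Setting $S'=I'=0$ in \eqref{eq:SIR1redlim} shows the only equilibria in the closed first quadrant are the points $(S,0)$ on the $S$-axis (the $S$-axis is invariant). On the axis, $\dot{S}=-\beta S I=0$, so every axis point is an equilibrium of the planar system, but the invariant-set structure is governed by the sign of $I'=I(\beta S-\gamma)$ just off the axis: $I$ increases where $S>\gamma/\beta$ and decreases where $S<\gamma/\beta$. Combining this monotonicity of $I$ along the orbit with the strict monotonicity of $S$ (since $S'=-\beta SI<0$ whenever $I>0$, so $S$ is strictly decreasing along any interior orbit), I conclude that $S(t)$ decreases monotonically from $S_+$ toward $S_-$ as $t\to+\infty$ and increases toward $S_+$ as $t\to-\infty$, with $I\to 0$ at both ends. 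Hence the orbit is heteroclinic to the two axis points $(S_-,0)$ and $(S_+,0)$, which are exactly the two roots of $I(S)=0$.

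The main obstacle I anticipate is not the conservation law, which is immediate, but rather making the endpoint analysis fully rigorous: one must argue that the interior orbit genuinely limits onto the two distinct axis points rather than, say, accumulating on a segment of the (non-isolated) axis equilibria. This is resolved by the strict monotonicity of $S$ along interior orbits together with the confinement to a single compact level curve, which forces convergence of $S(t)$ to the well-defined endpoints $S_\pm$ and, via the level-set equation, forces $I\to 0$; identifying $S_-$ with $S_\infty<1/R_0$ and $S_+=S_0$ recovers the statement preceding the lemma.
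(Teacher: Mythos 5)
Your proof is correct. Note that the paper offers no proof of this lemma at all: it is stated as well known, with citations to the classical SIR literature, so there is no in-paper argument to compare against; yours is the standard one (direct verification that $\dot\Gamma\equiv 0$, strict concavity of the level curves $I(S)=(\gamma\ln S)/\beta-S-c/\beta$ with maximum at $S=\gamma/\beta$ and $I\to-\infty$ at both ends, and strict monotonicity of $S$ along interior orbits). Two small remarks. First, your intermediate sentence claiming that boundedness alone forces the $\omega$- and $\alpha$-limit sets to lie in the set of equilibria is not justified as written — with a whole axis of non-isolated equilibria this is precisely the delicate point — but it is also redundant: your subsequent argument (monotone convergence of $S(t)$ to some $S^*\in[S_-,S_+]$, whence $I(t)\to I(S^*)$ along the level curve, and $I(S^*)>0$ would give $S'\to-\beta S^* I(S^*)<0$, contradicting convergence of $S$; here $S^*\geq S_->0$ keeps the rate bounded away from zero) is self-contained and correctly rules out accumulation on a segment of axis equilibria, both forward and, by the symmetric argument on the backward orbit, in reverse time. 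Second, your closing identification $S_+=S_0$ holds only when the initial point lies on the axis ($I_0=0$); for $I_0>0$ the backward limit satisfies $S_+>S_0$, which is consistent with the paper's later definition of $S_\infty$ via $\Gamma(S,0)=\Gamma(S_0,0)$, but this side remark is outside the lemma's claim and does not affect the validity of the proof.
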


\begin{figure}[htbp]\centering

		\begin{tikzpicture}
		\node at (0,0){\includegraphics[scale=1.4]{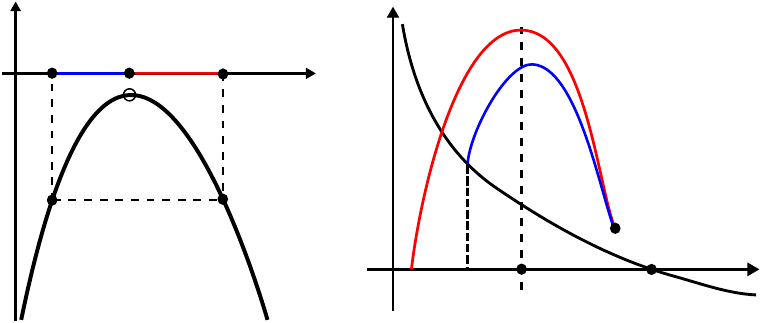}};
		\node at (0.2,2.4){$I$};
		\node at (-2.2,1.5){$1$};
		\node at (-.8,1.25){$S$};
		\node[text=red] at (-2.9,1.5){$S_0$};
		\node at (-3.6,1.6){$\frac{1}{R_0}$};
		\node[text=blue] at (-4.2,1.5){$S_\infty$};
		\node at (-2.5,-1.5){$\Gamma(S,0)$};
		\node at (5.6,-1.5){$S$};
		\node at (4.0,-1.3){$1$};
		\node at (4.0,-0.8){$(S_0,I_0)$};
		\node at (6.3,-2.){$I=\frac{\epsilon\xi(1-S)}{\beta S}$};
		\node at (2.25,-2.2){$S=\frac{1}{R_0}$}; 
		\node[text=red] at (0.5,-1.9){$S_\infty$};
		\node at (0.9,-1.25){\footnotesize $\mathcal{O}(\epsilon)$ };
		\node at (0.9,-1.47){\footnotesize $\overbrace{}$};
		\end{tikzpicture}
		\caption{Left: function $\Gamma(S,0)$, intersection with horizontal lines give the starting and ending points of a heteroclinic orbit of the layer equation \eqref{eq:SIR1redlim}. Right: qualitative comparison between perturbed and unperturbed SIR systems in fast time scale. In red we show an orbit of \eqref{eq:SIR1redlim} given by $\Gamma(S,I)=\Gamma(S_0,I_0)$ and in blue a small perturbation of it corresponding the related orbit of \eqref{eq:SIR1red3}.}
				\label{fig:gamma}

\end{figure}

From Lemma \ref{lemm:Sinf} we define $S_\infty\in(0,\frac{1}{R_0})$ to be the unique non-trivial solution of the equation $\Gamma(S,0)=\Gamma(S_0,0)$ where $S_0>\frac{1}{R_0}$.

For future use, let us define the map
\begin{equation}
\label{Pi1}
\Pi_1:\{S\in (1/R_0,1] \}\rightarrow \{S\in (0,1/R_0)\}
\end{equation}
 that maps $S_0$ into $S_\infty$, and which is induced by the flow of \eqref{eq:SIR1redlim}, or is equivalently given by $\Gamma$.

So far, we know that the solutions of \eqref{eq:SIR1red3} away from the critical manifold are closely given by $\Gamma(S,I)$ as shown in the right side of Figure \ref{fig:gamma}. Therefore, the next step is to focus on a small region close to $\mathcal{C}_0$. That is, for the analysis that follows, we assume $I$ to be $\cO(\epsilon)$-small.
In particular, and following Lemma \ref{lemm:Sinf}, if we choose $I_0\in\mathcal{O}(\epsilon^2)$, we have an explicit relation (up to a $\mathcal{O}(\epsilon)$ error) between $S_\infty$ and $S_0$, namely, $\Gamma(S_\infty,0)\approx\Gamma(S_0,I_0)=\Gamma(S_0,0)+\cO(\epsilon)$.

Considering the signs of the derivatives in the perturbed system \eqref{eq:SIR1red3}, we see that orbits spiral counterclockwise. Moreover, system~\eqref{eq:SIR1red3} has a two equilibria, namely $(S,I)=(1,0)$ and one which is $\mathcal{O}(\epsilon)$-close to the point $(1/R_0 , 0)$, as shown in Figure~\ref{fig:orbita}, given by $(S,I)=(S_E,I_E):=(\frac{1}{R_0}+\epsilon\frac{\xi}{\beta}, \alpha_\epsilon (S_E))$, where
\begin{equation}\label{eq:alphaeps}
\alpha_\epsilon(S)=\frac{\epsilon\xi(1-S)}{\beta S}
\end{equation}
is obtained from the nullcline for $S$ in~(\ref{eq:SIR1}). Regular perturbation arguments imply that an orbit of the perturbed system  \eqref{eq:SIR1red3}, starting from a point $(S_0,I_0)$ with $I_0\in \mathcal{O}(\epsilon)$ and $S_0>S_E$, follows $\mathcal{O}(\epsilon)$-closely from below, since the $\mathcal{O}(\epsilon)$ contribution is negative, a power level of $\Gamma(S,I)$, until it reaches the nullcline of $S$ given by $I=\frac{\epsilon\xi(1-S)}{\beta S}$, as shown on the right half of Figure~\ref{fig:gamma}, at a point with $S$ coordinate $\mathcal{O}(\epsilon)$-close to $S_\infty$.
\begin{figure}[htbp]\centering
	\begin{tikzpicture}
		\node at (0,0){\includegraphics[scale=1.8]{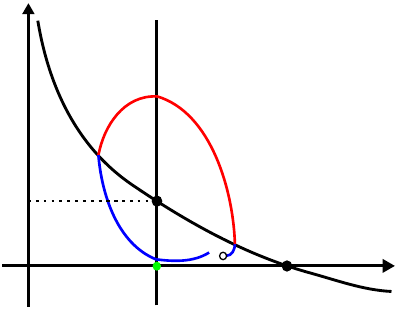}};
		\node at (-3.4,2.7){$I$};
		\node at (-0.1,-0.6){\small $(S_E,I_E)$};
		\node at (1.8,-1.8){$1$};
		\node at (3.5,-1.7) {$S$};
		\node at (3,-2.8) {$I=\alpha_\epsilon(S)$};
		\node[text=red] at (0.3,2.3) {$\dot{S}<0,\dot{I}>0$};
		\node[text=red] at (-1.85,2.3) {$\dot{S}<0,\dot{I}<0$};
		\node[text=blue] at (0.4,-2.5) {$S'<0,I'<0$};
		\node[text=blue] at (-1.95,-2.5) {$S'>0,I'>0$};
		\node at (-3.8,-1.5){{\large$\mathcal{O}(\epsilon)$} {\huge $\{$ }};
	\end{tikzpicture}
	\caption{Schematic representation of the orbits of \eqref{eq:SIR1red3} on the two time scales. Red: fast orbit; blue: slow orbit; green: non-hyperbolic point.}
			\label{fig:orbita}
\end{figure}

It is also well known \cite{heth05,Lyap} that the endemic equilibrium $(S_E,I_E)$ is globally asymptotically stable, as stated below.
\begin{theorem}
\label{theor:convergence} Consider \eqref{eq:SIR1red3}. All trajectories with initial conditions $0\leq S(0)\leq1$, $0<I(0)\leq1$ with $S(0)+I(0)\leq 1$ converge asymptotically towards the (endemic) equilibrium point $(S_E,I_E)$.
\end{theorem}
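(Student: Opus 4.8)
The plan is to avoid Lyapunov functions entirely and instead exploit the two–timescale structure to build a one–dimensional Poincar\'e return map, proving global convergence by combining a global ``funneling'' statement (from GSPT and the entry–exit function) with a local stability argument for the final leg. First I would record the elementary invariance and boundedness facts: the triangle $\{S,I\geq0,\ S+I\leq1\}$ is positively invariant, the line $\{I=0\}$ is invariant so that $I(\tau)>0$ for all $\tau\geq0$ whenever $I(0)>0$, and the only equilibria in the region are $(1,0)$ and $(S_E,I_E)$. A linearization of \eqref{eq:SIR1red3} at $(1,0)$ gives the transverse eigenvalue $\beta-\gamma-\epsilon\xi=\gamma(R_0-1)-\epsilon\xi>0$ for $\epsilon$ small, so $(1,0)$ is a saddle that repels every interior orbit; hence no trajectory with $I(0)>0$ can accumulate on it.

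Next, using the counterclockwise spiraling already established, I would fix a section transverse to the flow near $\mathcal{C}_0$ (a horizontal segment $\{I=x_0\}$ with $x_0\in\cO(\epsilon)$ over $S<1/R_0$) and decompose one turn into a fast excursion and a slow passage. The fast excursion is governed, up to $\cO(\epsilon)$, by the conserved quantity $\Gamma$ and is encoded by the map $\Pi_1$ of Lemma~\ref{lemm:Sinf}: it carries a take-off point $S_{\mathrm{out}}>1/R_0$ back to the section near $S_\infty<1/R_0$ with $\Gamma(S_\infty,0)=\Gamma(S_{\mathrm{out}},0)$. The slow passage along $\mathcal{C}_0$ drifts $S$ rightward (since the reduced flow $\dot S=\xi(1-S)$ is positive), crosses the fold at $1/R_0$, and continues onto the repelling branch; its take-off point is given by the entry–exit function \eqref{eq:pzero}. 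After the shift $y=S-1/R_0$, one has $f(0,y,0)=\beta y$ and $g(0,y,0)=\xi(1-S)>0$ for $S<1$, so \eqref{eq:pzero} reduces to the implicit relation $\int_{S_{\mathrm{in}}}^{S_{\mathrm{out}}}\frac{\beta(S-1/R_0)}{\xi(1-S)}\,\mathrm{d}S=0$. Composing the slow and fast pieces yields a scalar return map $\mathcal{R}$ on entry values $S_{\mathrm{in}}<1/R_0$.

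I would then locate the fixed point of $\mathcal{R}$ and show it attracts. Write $\phi_\Gamma$ and $\phi_H$ for the two ``conjugacy'' maps sending a point $S<1/R_0$ to the point $>1/R_0$ carrying, respectively, the same value of $\Gamma(\cdot,0)$ and the same value of the entry–exit integral $H$, with $H'(S)=\frac{\beta(S-1/R_0)}{\xi(1-S)}$. Since $\Gamma(\cdot,0)$ is maximal and $H$ is minimal exactly at $S=1/R_0$, one step of $\mathcal{R}$ is $S_{\mathrm{in}}\mapsto\phi_\Gamma^{-1}(\phi_H(S_{\mathrm{in}}))$, and its only fixed point in the singular limit is $S=1/R_0$, which perturbs to $S_E$ for $\epsilon>0$. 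Because $\phi_\Gamma^{-1}$ is decreasing, attractivity is equivalent to the pointwise comparison $\phi_H(S_{\mathrm{in}})<\phi_\Gamma(S_{\mathrm{in}})$ for $S_{\mathrm{in}}<1/R_0$ (and symmetrically on the other side), i.e.\ each loop returns strictly closer to the fold; I would derive this by comparing the integrands defining $\Gamma$ and $H$. Once orbits are shown to enter an arbitrarily small neighbourhood of $(S_E,I_E)$, local asymptotic stability—obtained from the linearization of \eqref{eq:SIR1red3} at $(S_E,I_E)$, whose Jacobian has negative trace and positive determinant—completes the proof.

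The hard part will be the contraction estimate. Near the fixed point $\Gamma(\cdot,0)$ and $H$ are both quadratic about $1/R_0$, so to leading order $\phi_\Gamma$ and $\phi_H$ coincide with the reflection $S\mapsto 2/R_0-S$; the genuine per-loop contraction is therefore a higher-order effect stemming from the asymmetry of the two integrands about $1/R_0$, and making this quantitative and uniform in $\epsilon$ is delicate. A secondary technical obstacle is matching: formula \eqref{eq:pzero} is valid only in an $\cO(\epsilon)$ tube around $\mathcal{C}_0$, whereas $\Pi_1$ describes the flow away from it, so the two legs must be glued on the section with controlled $\cO(\epsilon)$ errors. This is precisely why the global step can only be expected to drive trajectories into a fixed neighbourhood of $(S_E,I_E)$, with the final convergence delegated to the local analysis.
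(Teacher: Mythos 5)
Your proposal is correct and takes essentially the same route as the paper's own non-Lyapunov analysis surrounding Theorem~\ref{theor:convergence}: the fast map $\Pi_1$ induced by the conserved quantity $\Gamma$ (Lemma~\ref{lemm:Sinf}), the exponential-smallness/gluing step (Lemma~\ref{lemm:l}), the entry--exit map $\Pi_2$ (your relation $\int_{S_{\mathrm{in}}}^{S_{\mathrm{out}}}\beta(S-1/R_0)/(\xi(1-S))\,\mathrm{d}S=0$ is exactly \eqref{eq:pzero1} after the substitution $u=R_0S-1$), the per-loop inequality $\Pi_2(P)<\Pi_1^{-1}(P)$ (established in Lemma~\ref{lemm:monotone} via the auxiliary function $F+\frac{b}{1-b}G$, which amounts to the integrand comparison you sketch, with weights $1/(1-S)$ versus $1/S$), monotone sequences converging to $1/R_0$, and a final linearization at $(S_E,I_E)$. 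One reassurance on what you flag as the hard part: no quantitative, $\epsilon$-uniform contraction estimate is needed, since strict monotonicity of the singular sequences plus a continuity/contradiction argument already funnels orbits into an $\epsilon$-neighbourhood of $(S_E,I_E)$, after which the local analysis takes over, exactly as in the paper.
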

The theorem can be proved using the Lyapunov function
	\begin{equation}
		L_1(S,I)=S+I-S_E \ln (S) - I_E \ln(I)-C_E,
	\end{equation}
	with $C_E=S_E+I_E-S_E \ln (S_E) - I_E \ln(I_E)$, together with  Lasalle's invariance principle \cite{Lasalle}; or with \cite{Lyap,ShuaivdD}
	\begin{equation}
\label{Lyap2}
L_2(S,I) = I - I_E - I_E \ln(I/I_E) +\frac{\beta}{2(2\mu+\gamma) }(S + I - S_E + I_E)^2.
\end{equation}

Here we are going to describe how solutions approach the equilibrium, for $\e > 0$ small. Once it is shown that solutions are in a neighbourhood of the equilibrium, local methods can be used to prove convergence to the equilibrium. Such an approach will be used for the other models as well. Our motivation is to present a method of analysis that does not depend on finding a Lyapunov function, which is, in general, a difficult task.

A convenient step, which is justified by the following Lemma, is to bring \eqref{eq:SIR1red3} to a standard form, in order to then apply the entry-exit formula. 

\begin{lemm}\label{lemm:l} Consider \eqref{eq:SIR1red3} and an initial condition $(S_0,I_0)$ with $0<S_0\leq\frac{\gamma}{\beta}-\Delta<S_E$ and $I_0>0$, where $\Delta\in\cO(1)$  and $I_0\in\mathcal{O}(\epsilon)$. Let $0 < \Delta_1 < \Delta$, $\Delta_1\in\cO(1)$, and $(S^*,I^*) $ denote the point where the corresponding trajectory intersects the line $\ell =\left\{ (S,I)\in\mathbb R^2\,|\, S=\frac{\gamma}{\beta} - \Delta_1 \right\}$. Then, for sufficiently small $\epsilon>0$ we have that $I^*$ is exponentially small. Furthermore, the first point at which the trajectory intersects the line $\mathcal{\pi}_\epsilon=\left\{ (S,I)\in\mathbb R^2\,|\, I = I_0 \e^k\right\}$ satisfies $S = S_0 + O(\e \log(\e))$ for $\e \to 0$.
\end{lemm}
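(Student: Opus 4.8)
The plan is to prove the two assertions by direct integral estimates on the flow of \eqref{eq:SIR1red3}, treating them separately since they concern two different phases of the same orbit. First observe that, as long as the trajectory satisfies $S<S_E=\frac{\gamma}{\beta}+\epsilon\frac{\xi}{\beta}$, the second equation gives $I'=I(\beta S-\gamma-\epsilon\xi)<0$, so $I$ is strictly decreasing; since $S_0<\frac{\gamma}{\beta}-\Delta_1<S_E$ and (as the argument will verify) $S$ stays below $\frac{\gamma}{\beta}-\Delta_1$ until the orbit reaches $\ell$, the level $I=I_0\epsilon^k<I_0$ is crossed exactly once, and this crossing happens during the initial, fast drop of $I$, whereas $\ell$ is reached only later, after a slow $\cO(1)$ drift of $S$ to the right. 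Thus the $\pi_\epsilon$ statement describes the fast transient and the $\ell$ statement the subsequent slow passage.

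For the $\pi_\epsilon$ statement I would work in the fast time $t$ and set up a bootstrap. Writing the first equation in integral form, $S(t)=S_0+\int_0^t[\epsilon\xi(1-S(s))-\beta S(s)I(s)]\,\mathrm{d}s$, and assuming provisionally that $S(s)\le \frac{\gamma}{\beta}-\frac{\Delta}{2}$ on the relevant interval, one has $\beta S-\gamma-\epsilon\xi\le-\lambda$ for some $\lambda\in\cO(1)$, $\lambda>0$, hence $I(t)\le I_0 e^{-\lambda t}$. Let $t^*$ be the first time at which $I=I_0\epsilon^k$; integrating $\frac{\mathrm{d}}{\mathrm{d}t}\log I=\beta S-\gamma-\epsilon\xi$ and using that the integrand is bounded between two negative $\cO(1)$ constants gives $t^*\in\cO(\log(1/\epsilon))$. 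The two contributions to $S(t^*)-S_0$ are then controlled by $\int_0^{t^*}\epsilon\xi\,\mathrm{d}s\le \epsilon\xi\,t^*=\cO(\epsilon\log(1/\epsilon))$ and $\int_0^{t^*}\beta S I\,\mathrm{d}s\le \beta I_0\int_0^\infty e^{-\lambda s}\,\mathrm{d}s=\cO(I_0)=\cO(\epsilon)$. Hence $|S(t^*)-S_0|\in\cO(\epsilon\log\epsilon)$, which is in particular $o(1)$ and so closes the bootstrap (for $\epsilon$ small the provisional bound $S\le\frac{\gamma}{\beta}-\frac{\Delta}{2}$ indeed holds on $[0,t^*]$, since $S_0\le\frac{\gamma}{\beta}-\Delta$). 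This is exactly $S=S_0+\cO(\epsilon\log\epsilon)$ at $\pi_\epsilon$.

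For the $\ell$ statement I would instead use the slow time $\tau$, along which \eqref{eq:SIR1red1} yields $\frac{\mathrm{d}}{\mathrm{d}\tau}\log I=\frac{1}{\epsilon}(\beta S-\gamma)-\xi$. Since by the first part $I$ has already become $\cO(\epsilon^{k+1})$, the term $\frac{\beta}{\epsilon}SI$ in $\dot S$ is $\cO(\epsilon^{k})$ and $S$ obeys $\dot S=\xi(1-S)+\cO(\epsilon^{k})$, a regular perturbation of $\dot S=\xi(1-S)$; consequently $S$ increases monotonically from $\approx S_0$ to $\frac{\gamma}{\beta}-\Delta_1$ over a slow-time interval $[\tau_1,\tau_2]$ whose length is bounded above and below by $\cO(1)$ constants (the $S$-distance travelled is at least $\Delta-\Delta_1\in\cO(1)$ while $\dot S\in\cO(1)$). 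On this interval $\beta S-\gamma\le-\beta\Delta_1<0$, so $\int_{\tau_1}^{\tau_2}(\beta S-\gamma)\,\mathrm{d}\tau\le-\beta\Delta_1(\tau_2-\tau_1)=:-c$ with $c\in\cO(1)$, $c>0$. Integrating the logarithmic derivative gives $\log I^*\le\log I(\tau_1)-\frac{c}{\epsilon}+\cO(1)$, whence $I^*\le I_0\,e^{-c/\epsilon}$ up to an $\cO(1)$ factor is exponentially small, as claimed.

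The delicate point, and the one I would write out most carefully, is the self-consistency of the first part: because the crossing time $t^*$ and the displacement $S(t^*)-S_0$ each depend on the size of $\beta S-\gamma$ along the orbit, they must be estimated simultaneously. The bootstrap resolves this, and it works precisely because the hypotheses force an $\cO(1)$ contraction rate ($S_0$ lies an $\cO(1)$ distance $\Delta$ from the fold point $S=\frac{\gamma}{\beta}$) while $I_0\in\cO(\epsilon)$ keeps the nonlinear term $\beta SI$ of order $\epsilon$; the same $\cO(1)$ gap, now measured by $\Delta_1$, is what keeps the integral in the second part bounded away from zero and hence yields a genuine $e^{-c/\epsilon}$ contraction rather than merely algebraic smallness.
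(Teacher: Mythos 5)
Your proposal is correct and follows essentially the same route as the paper's proof: both rest on the uniform negative bound $\beta S-\gamma\leq-\beta\Delta_1<0$ along the passage, giving the Gronwall-type decay $I(t)\leq I_0\e^{-\lambda t}$, an $\mathcal{O}(1)$ slow crossing time to $\ell$ (hence $I^*\leq I_0\e^{-c/\epsilon}$), and a fast-time $\mathcal{O}(\log(1/\epsilon))$ bound for reaching $\pi_\epsilon$ combined with $S'=\mathcal{O}(\epsilon)$ to get $S=S_0+\mathcal{O}(\epsilon\log\epsilon)$. The only differences are presentational: the paper first rescales via $v=I/\epsilon$ to put the system in standard form and leaves the second claim as immediate, whereas you work directly in the original variables and spell out the bootstrap and the integral estimates that the paper leaves implicit.
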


\begin{proof}
	We first note that the assumption on $S_0$ simply means that $S_0$ is bounded away from $S_E$ uniformly in $\epsilon$. For the proof it is convenient to define new coordinates $(S,v)$ by $(S,I/\epsilon)=(S,v)$. Then \eqref{eq:SIR1red3} becomes 
\begin{equation}\label{eq:SIR1red5}
\begin{aligned}
S'&=\epsilon(\xi(1- S) -\beta Sv),\\
v'&=v(\beta S-\gamma-\epsilon\xi).
\end{aligned}
\end{equation}
A trajectory of \eqref{eq:SIR1red5} with initial condition $(S_0,v_0)$ with $S_0<\frac{\gamma}{\beta}$ and $v_0=I_0/\epsilon\in\mathcal O(1)$ quickly converges towards and stays $\mathcal{O}(\epsilon)$-close to the $S$-axis for some time. We know from the reduced system that $S'>0$ on the critical manifold, this guarantees that the trajectory crosses the line $\ell$ in a small neighbourhood of the critical manifold. Let $T$ denote the (slow) time it takes the trajectory to reach $\ell$. During such time, $\beta S-\gamma\leq - \beta \Delta_1 < 0$ and therefore 
$$
v'\leq -Kv \implies v\bigg(\frac{T}{\epsilon}\bigg)\leq v_0\e^{-K\frac{T}{\epsilon}} \implies  I\bigg(\frac{T}{\epsilon}\bigg)\leq \epsilon v_0\e^{-K\frac{T}{\epsilon}},
$$
with $K = \beta \Delta_1 >0$. \\
The last claim follows immediately from $v(t)\leq v_0\e^{-Kt}$.
\end{proof}
Note in particular from Lemma \ref{lemm:l} that, before the trajectory intersects $\ell$, its corresponding $I$-coordinate is eventually $\mathcal{O}(\epsilon^2)$, which is what we need for the forthcoming arguments.

\subsection{Applying the entry-exit function}
We are now going to apply the entry-exit formula to describe the way trajectories pass near the non-hyperbolic point $(S,I)=(1/R_0,0)$. 

From Lemma \ref{lemm:Sinf} and \ref{lemm:l}, we can consider an initial point for system \eqref{eq:SIR1red3} with $S_0 <  1/R_0$ and $I_0 = \cO(\e^2)$. Next, we apply a change of variables defined by

\begin{equation}
	S=\frac{u+1}{R_0}, \quad I=\epsilon v,
\end{equation}
which brings the system to a standard form, with $u$ slow and $v$ fast, that is
\begin{equation}\label{eq:uv}
\begin{aligned}
v'&=\gamma(u-\epsilon\xi)v,\\
u'&=\epsilon(\xi(R_0-u-1)-\beta v(u+1)).
\end{aligned}
\end{equation}
So, using the notation of Section \ref{entryexit},
\begin{equation}
	\begin{split}
		f(v,u,\epsilon)&=\gamma(u-\epsilon\xi),\\
		g(v,u,\epsilon)&= \xi(R_0-u-1)-\beta v(u+1),
	\end{split}
\end{equation}
which satisfy the hypotheses of the entry-exit function. Indeed, $S<1$ implies $u<R_0-1$, which means $g(0,u,0)>0$ in the relevant region. Moreover, $f(0,u,0)=\gamma u$, which clearly has the same sign as $u$.

Since $v_0 = I_0/\e = \cO(\e)$, we can now apply the entry-exit formula, which gives $p_0(u_0)$ as the only positive solution of 
\begin{equation}\label{eq:pzero1}
\int_{u_0}^{p_0(u_0)} \frac{u}{R_0-1-u}\textnormal{d}u = 0.
\end{equation}

The integral \eqref{eq:pzero1} can be solved explicitly, giving $p_0(u_0)$ as the positive solution of
\begin{equation}\label{eq:pzero1exp1}
-p_0(u_0)+u_0 - (R_0-1)\ln\bigg(\frac{R_0-1-p_0(u_0)}{R_0-1-u_0}\bigg)=0.
\end{equation}

We now change back to the original $(S,I)$ variables, and introduce, beyond $\Pi_1$ defined in \eqref{Pi1}, the map
\begin{equation}
\label{Pi2}
\Pi_2:\{S\in (0,1/R_0)\} \rightarrow \{S\in (1/R_0,1)\}
\end{equation}
defined by $\dfrac{p_0(u_0)+1}{R_0 }$, where $u_0 = R_0 S_0 - 1$. Combining together the previous results, we can state the following:
\begin{prop}
\label{prop:fast_slow_flow}
Consider the solution of \eqref{eq:SIR1red1} with an initial condition $S_0 > 1/R_0$ and $I_0 = \cO(\e^2)$. Then the orbit $\{S_\e(t), I_\e(t),\ t \in [0,T]\}$ converges for $\e \to 0$ to the union of the orbit under the fast flow 
$$\{(S,I) : \Gamma(S,I) = \Gamma(S_0,0),\ \Pi_1(S_0) \le S \le S_0 \} $$
and under the slow flow
$$ \{(S,0): \ \Pi_1(S_0) \le S \le \Pi_2(\Pi_1(S_0)) \} $$
where $T$ is such that the solution of $S' = \xi(1-S),\ S(0) = \Pi_1(S_0)$ satisfies $S(T) = \Pi_2(\Pi_1(S_0))$.
\end{prop}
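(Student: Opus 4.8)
The plan is to realise the limiting orbit as the concatenation of a fast epidemic excursion and a slow passage, matching the two through the Lemmas and the entry-exit formula already set up. Concretely, I would split $[0,T]$ into three phases: (i) the fast outbreak carrying the orbit from $(S_0,I_0)$ down into an $\cO(\e)$-neighbourhood of the $S$-axis near $S=\Pi_1(S_0)$; (ii) the relaxation onto, and slow drift along, the attracting branch $\{I=0,\ S<1/R_0\}$ of the critical manifold; and (iii) the slow passage through the non-hyperbolic point $S=1/R_0$ governed by the entry-exit mechanism, ending near $S=\Pi_2(\Pi_1(S_0))$. The assertion to be proved is Hausdorff convergence of the orbit, viewed as a subset of the plane, to the union of the stated fast and slow pieces.

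For phase (i), on any bounded fast-time interval the system \eqref{eq:SIR1red3} is a regular $\cO(\e)$ perturbation of the layer equation \eqref{eq:SIR1redlim}. By continuous dependence on $\e$ (equivalently Theorems \ref{FenTh1}--\ref{FenTh2} away from the slow manifold) together with Lemma \ref{lemm:Sinf}, the orbit stays $\cO(\e)$-close to the level set $\{\Gamma(S,I)=\Gamma(S_0,0)\}$, which is the heteroclinic from $(S_0,0)$ to $(\Pi_1(S_0),0)$; here the choice $I_0=\cO(\e^2)$ only shifts the level by $\cO(\e^2)$. Hence the image of phase (i) converges to the fast piece $\{(S,I):\Gamma(S,I)=\Gamma(S_0,0),\ \Pi_1(S_0)\le S\le S_0\}$. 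The delicate point is the corner where this heteroclinic asymptotes to the axis in infinite fast time and the slow dynamics take over; this is exactly what Lemma \ref{lemm:l} quantifies. I would invoke it to conclude that, once in the attracting region, $I$ becomes exponentially small — in particular $\cO(\e^2)$ — while $S$ moves only by $\cO(\e\log\e)$, so the orbit reaches the neighbourhood of the fold with $v=I/\e=\cO(\e)$, providing the correct entry datum for phase (iii).

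For phase (iii) I would apply the change of variables $S=(u+1)/R_0$, $I=\e v$, bringing the system to the standard form \eqref{eq:uv}, whose coefficients were already checked to satisfy the entry-exit hypotheses. The entry value $u_0=R_0\Pi_1(S_0)-1<0$ maps, under \eqref{eq:pzero1}--\eqref{eq:pzero1exp1}, to the unique positive exit value $p_0(u_0)$, so $S$ leaves the $\cO(\e)$-neighbourhood of the axis at $\Pi_2(\Pi_1(S_0))=(p_0(u_0)+1)/R_0$. Thus the image of phases (ii)--(iii) converges to the slow segment $\{(S,0):\Pi_1(S_0)\le S\le\Pi_2(\Pi_1(S_0))\}$, and taking the union with the fast piece yields the limiting orbit. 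The terminal time $T$ is then identified by observing that the fast excursion occupies only $\cO(\e)$ of slow time, which is negligible as $\e\to0$; in the limit $T$ is the time for the reduced slow flow $\dot S=\xi(1-S)$ to carry $S$ from $\Pi_1(S_0)$ to $\Pi_2(\Pi_1(S_0))$, precisely as stated.

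I expect the principal obstacle to be the rigorous matching at the two corners rather than any single computation. The first corner — the junction of the fast heteroclinic, which reaches the axis only asymptotically, with the slow attracting branch — must be handled so that the entry-exit map receives initial data of the right order ($v_0=\cO(\e)$); this is the role of Lemma \ref{lemm:l}, and care is needed because normal hyperbolicity is strong but the relevant times are $\cO(1/\e)$. The second corner is the loss of normal hyperbolicity at $S=1/R_0$, where Fenichel's theorems fail and the entry-exit formula is the substitute; the genuine content is that the $\cO(\e)$-accurate tracking provided by the entry-exit result upgrades to Hausdorff convergence of the whole orbit, together with the verification that no orbit escapes the neighbourhood of the axis before the predicted exit $\Pi_2(\Pi_1(S_0))$.
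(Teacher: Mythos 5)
Your proposal is correct and takes essentially the same route as the paper: there the proposition is stated simply as ``combining together the previous results,'' namely the regular-perturbation tracking of the level set of $\Gamma$ from Lemma \ref{lemm:Sinf}, the exponential decay of $I$ from Lemma \ref{lemm:l} supplying the entry datum $v_0=I_0/\e=\cO(\e)$, and the entry-exit map in the coordinates \eqref{eq:uv} via \eqref{eq:pzero1}--\eqref{eq:pzero1exp1} producing the exit point $\Pi_2(\Pi_1(S_0))$. Your explicit handling of the two corner matchings and of the negligible slow-time cost of the fast excursion merely spells out what the paper leaves implicit.
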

The limit orbit is sketched in Figure~\ref{fig:pi1pi2}. Considering the composition of $\Pi_1$ and $\Pi_2$ gives the Poincar\'e map
$$ \Pi: \{S\in [S_E,1), I=I_0\} \to \{\Pi_2(\Pi_1(S))\in [S_E,1), I=I_0\}. $$

\begin{figure}[htbp]\centering
	\begin{tikzpicture}
	\node at (0,0){\includegraphics[scale=3]{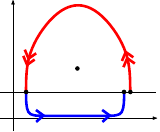}};
	\node at (2.8,-0.8){$I=I_0$};
	\node at (1.2,-0.55){$S_1$}; 
	\node at (-1.35,-0.55){$P_0$};
	\node at (1.8,-1.05){$S_0$};
	\node at (2.6,-1.6){$S$};
	\node at (-2.3,2){$I$};
	\node[text=red] at (0,1.6){$\Pi_1$};
	\node[text=blue] at (0,-1.3){$\Pi_2$};
	\node at (0,-0.4){$(S_E,I_E)$};
	\end{tikzpicture}
	\caption{Sketch of the fast and slow dynamics defining the maps $\Pi_1$ and $\Pi_2$. The fact that $S_1<S_0$ is shown below.}
	\label{fig:pi1pi2}
\end{figure}

In this notation, we define $P_0=\Pi_1(S_0)$, $S_1=\Pi_2(P_0)=\Pi(S_0)$. These correspond, in the $u$-coordinate, to
$$
u_0=R_0P_0-1\approx R_0 S_\infty -1, \quad p_0(u_0)=R_0 S_1 - 1.
$$

We rewrite~(\ref{eq:pzero1exp1}) as
$$
P_0-S_1 - \bigg(1-\frac{1}{R_0}\bigg) \ln\bigg(\frac{1-S_1}{1-P_0}\bigg)=0.
$$
Which means that $S_1$, the exit point, is the only root greater than $P_0$ of
\begin{equation}\label{eq:effe}
F(x)=x -P_0+ \bigg(1-\frac{1}{R_0}\bigg) \ln\bigg(\frac{1-x}{1-P_0}\bigg).
\end{equation}

It is clear that when the trajectory is in a neighbourhood of $(S_1, I_0)$, as implied by the entry-exit map, one can reapply Proposition \ref{prop:fast_slow_flow}, obtaining $P_1 = \Pi_1(S_1)$ (reached through the fast flow), $S_2 = \Pi_2(P_1)$ (slow flow), and so on, obtaining two sequences 
\begin{equation}
\label{sequences}
 S_0,\ S_1 = \Pi_2(P_0), \ldots, S_n = \Pi_2(P_{n-1}),\ldots \qquad P_0=\Pi_1(S_0)), \ldots, P_n = \Pi_1(S_{n}),\ldots
\end{equation}
\begin{lemm}
\label{lemm:monotone}
The sequence $\{S_n\}$ is decreasing and bounded below by $1/R_0$; the sequence $\{P_n\}$ is increasing and bounded above by $1/R_0$.
\end{lemm}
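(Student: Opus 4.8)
The plan is to encode both maps through the level sets of two scalar functions and thereby reduce the entire statement to a single comparison inequality. Define
\[
h(S) = S - \frac{1}{R_0}\ln S, \qquad k(S) = S + \Big(1 - \frac{1}{R_0}\Big)\ln(1-S).
\]
Then $h(S) = -\Gamma(S,0)/\beta$, so by Lemma \ref{lemm:Sinf} the map $\Pi_1$ sends $S$ to the unique point $\Pi_1(S) < 1/R_0$ with $h(\Pi_1(S)) = h(S)$; likewise, rewriting the root equation \eqref{eq:effe} as $F(x) = k(x) - k(P_0)$, the map $\Pi_2$ sends $P$ to the unique point $\Pi_2(P) > 1/R_0$ with $k(\Pi_2(P)) = k(P)$. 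Here $h$ has a strict minimum at $1/R_0$ and $k$ a strict maximum there, both being strictly monotone on either side. The two boundedness claims are then immediate from the codomains declared in \eqref{Pi1} and \eqref{Pi2}: $S_n = \Pi_2(P_{n-1}) \in (1/R_0,1)$ for $n\ge 1$ (and $S_0 > 1/R_0$ by hypothesis), while $P_n = \Pi_1(S_n) \in (0,1/R_0)$.

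It remains to establish the monotonicity. Since $h(P_n) = h(S_n)$ with $S_n > 1/R_0$, the point $S_n$ is exactly the right-hand $h$-partner of $P_n$, whereas $S_{n+1} = \Pi_2(P_n)$ is the right-hand $k$-partner of the \emph{same} point $P_n$. Hence $S_{n+1} < S_n$ for every $n$ will follow from one geometric fact: for any $P \in (0,1/R_0)$, the right $k$-partner of $P$ lies strictly to the left of its right $h$-partner. Granting this, $\{S_n\}$ is strictly decreasing; and because $\Pi_1$ is order-reversing (a larger $S > 1/R_0$ has a larger $h$-value, hence a smaller $h$-preimage $P < 1/R_0$), the sequence $P_n = \Pi_1(S_n)$ is strictly increasing. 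This disposes of all four assertions.

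The crux is therefore the comparison of the two partners, and this is where the real work lies. I would prove it by introducing, for $P \in (0,1/R_0)$, the auxiliary quantity $D(P) = h(s_k(P)) - h(P)$, where $s_k(P) > 1/R_0$ denotes the right $k$-partner. Since $h$ is strictly increasing on $(1/R_0,1)$, the desired inequality $s_k(P) < s_h(P)$ is equivalent to $h(s_k(P)) < h(s_h(P)) = h(P)$, i.e.\ to $D(P) < 0$. Now $D(1/R_0) = 0$, and differentiating the defining relation $k(s_k(P)) = k(P)$ gives
\[
D'(P) = k'(P)\left[\frac{h'(s_k)}{k'(s_k)} - \frac{h'(P)}{k'(P)}\right].
\]
The key computation is that $\dfrac{h'(S)}{k'(S)} = -\dfrac{1-S}{S} = 1 - \dfrac{1}{S}$, which is strictly increasing in $S$; together with $s_k(P) > 1/R_0 > P$ and $k'(P) > 0$ for $P < 1/R_0$, this yields $D'(P) > 0$ on $(0,1/R_0)$. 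Hence $D$ is strictly increasing with $D(1/R_0) = 0$, so $D(P) < 0$ for all $P < 1/R_0$, which is precisely the comparison we need.

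I expect the main obstacle to be exactly this last step: recognising the correct monotone quantity $D$ and carrying out the elementary but decisive simplification $h'/k' = 1 - 1/S$. Once that ratio is seen to be increasing, the sign of $D'$ — and with it the whole inductive monotonicity of the two sequences — falls out. A minor technical point to address is the regularity of $s_k(P)$ as $P \to (1/R_0)^-$, where $k'$ vanishes: there $s_k(P) \to (1/R_0)^+$ and $D$ extends continuously with $D(1/R_0)=0$, so strict monotonicity on the open interval together with continuity up to the endpoint suffices.
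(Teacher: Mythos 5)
Your proof is correct, but it reaches the decisive inequality by a genuinely different route than the paper. Both arguments reduce the lemma to the same comparison: the exit point $S_{n+1}=\Pi_2(P_n)$, which conserves your $k$ (the paper's $F$ in \eqref{eq:effe}), must lie strictly left of $S_n=\Pi_1^{-1}(P_n)$, which conserves your $h$ (the paper's $G$ in \eqref{eq:gggi}, i.e.\ the constant of motion of Lemma \ref{lemm:Sinf}); and both then obtain monotonicity of $\{P_n\}$ from $\Pi_1$ being order-reversing, exactly as you do. The difference is how the comparison is proved. The paper fixes the entry point ($a=P_0$, $b=1/R_0$) and exhibits the auxiliary combination $H(x)=F(x)+\frac{b}{1-b}G(x)$, which one checks is decreasing on $(0,1)$ since $H'(x)=\frac{1}{1-b}\bigl[1-\frac{(1-b)^2}{1-x}-\frac{b^2}{x}\bigr]\le 0$ by a Cauchy--Schwarz-type estimate; with $H(a)=0$ and $F(x^*)=0$ this gives $G(x^*)<0$ in two lines. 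You instead vary the entry point: differentiating the level relation $k(s_k(P))=k(P)$ and using strict monotonicity of the ratio $h'/k'=1-1/S$, you get $D'>0$ on $(0,1/R_0)$ and hence $D<0$ there, since $D\to 0$ at $1/R_0$. Your computation checks out (indeed $h'(S)=\frac{R_0S-1}{R_0S}$ and $k'(S)=\frac{1/R_0-S}{1-S}$, so $h'/k'=-(1-S)/S$), the sign analysis is right ($k'(P)>0$ for $P<1/R_0$ and $s_k(P)>1/R_0>P$), and you correctly flag both technical points: differentiability of $s_k$ (the implicit function theorem applies since $k'(s_k)\neq 0$ away from the endpoint) and the continuous extension $D(1/R_0)=0$. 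The trade-off: the paper's proof is shorter once $H$ is guessed, but the guess is unmotivated; your ratio-monotonicity criterion is systematic — a standard way to compare level-set partners of two functions sharing a critical point — and yields strictness, at the modest cost of the implicit-function setup and the endpoint limit. One phrasing caveat, not a gap: as stated, the right $h$-partner of an arbitrary $P\in(0,1/R_0)$ need not exist (for $P$ small one has $h(P)>h(1)$), but you only ever invoke $s_h(P_n)=S_n$, which exists by construction since $P_n=\Pi_1(S_n)$, and the inequality $h(s_k(P))<h(P)$ you actually prove holds for all $P$ in the interval.
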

\begin{proof}
We recall $S_1=\Pi_2(P_0)=\Pi(S_0)$, so if, for any $S_0 \in (1/R_0,1)$, such value is smaller/greater than $S_0$, $\{S_n\}$ is decreasing/increasing.\\
We notice that $\Pi(S_0)<S_0$ if and only if $\Pi_2(P_0)<\Pi_1^{-1}(P_0)$, where $\Pi_1^{-1}(P_0)>P_0$ is the only such root of
\begin{equation}\label{eq:gggi}
G(x)=x-P_0+\frac{1}{R_0}\ln\bigg(\frac{P_0}{x}\bigg),
\end{equation}
which comes from $\Gamma(x,0)=\Gamma(P_0,0)$; we recall that $\Gamma$ describes the trajectories of the layer equation. The functions $F$ and $G$ are sketched in Figure~\ref{fig:effgi}. 

\begin{figure}[htbp]\centering
	\begin{tikzpicture}
		\node at (0,0){\includegraphics[scale=2]{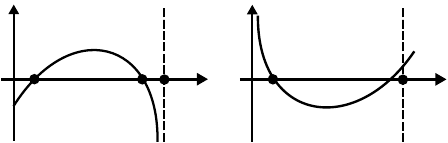}};
		\node at (3.9,0.1){$1$};
		\node at (-2.5,0.7){$F(x)$};
		\node at (-1,0.1){$1$};
		\node at (-1.8,-0.4){$S_1$};
		\node at (-3.6,-0.4){$P_0$};
		\node at (0.8,-0.4){$P_0$};
		\node at (2,-1){$G(x)$};
	\end{tikzpicture}
	\caption{Sketch of the functions $F$ and $G$, which implicitly define $\Pi_2$ and $\Pi_1^{-1}$, respectively.}
	\label{fig:effgi}
\end{figure}

Then, since $G$ is increasing for $x>1/R_0$,
$$
\Pi_2(P_0)<\Pi_1^{-1}(P_0) \iff G(\Pi_2(P_0))<0 .
$$

The fact that $S_1<S_0$ can be shown as a particular case of the following, more general proposition, by taking $a=P_0$, $b=1/R_0$, $x^*=S_1$.
\begin{lemm}
	Let $0<a<b<1$, $F(x)=x-a+(1-b)\ln(\frac{1-x}{1-a})$, $G(x)=x-a+b\ln(\frac{a}{x})$. Let $x^* \in (a,1)$ be the only zero greater than $a$ of $F$. Then $G(x^*)<0$.
\end{lemm}
\begin{proof} 
We use the auxiliary function $H(x)=F(x)+\frac{b}{1-b}G(x)$, which, under the hypotheses, is decreasing for $x\in(0,1)$. Next we have that $H(a)=F(a)+\frac{b}{1-b}G(a)=0$ which implies 
$$
0>H(x^*)=F(x^*)+\frac{b}{1-b}G(x^*)=\frac{b}{1-b}G(x^*) \implies G(x^*)<0. 
$$
\end{proof}

\begin{figure}[htbp]\centering
	\begin{tikzpicture}
	\node at (0,0){\includegraphics[scale=1.6]{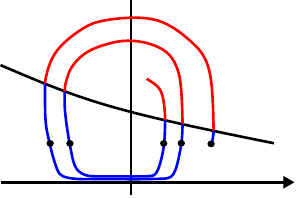}};
	\node at (1.25,-.9){$S_0$};
	\node at (0.7,-1){$S_1$};
	\node at (0,-0.55){$S_2$};
	\node at (-1,-0.5){$P_1$};
	\node at (-1.8,-1){$P_0$};
	\node at (2,-0.45){\small $I=\alpha_\epsilon(S)$};
	\node at (2.3,-1.1){$S$};
	\node at (-0.2,-1.7){\small $S=S_E$};
	\end{tikzpicture}
		\caption{$\alpha_\epsilon(S)=\mathcal{O}(\epsilon)$; the red parts of the orbit are fast for both variables, the blue parts are fast for $I$, slow for $S$.}
		\label{fig:duegiri}
\end{figure}
Since $\Pi_1$ is a decreasing function, from the fact that $\{S_n\}$ is decreasing, it follows that $\{P_n\}$ is increasing.
\end{proof}
\begin{prop}
The sequences $\{S_n\}$ and $\{P_n\}$ defined in \eqref{sequences} both converge to $1/R_0$.
\end{prop}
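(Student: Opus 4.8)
The plan is to upgrade the monotonicity and boundedness already obtained in Lemma~\ref{lemm:monotone} to convergence via the monotone convergence theorem, and then pin down the common limit using the continuity of the two maps together with the strict inequality $\Pi(S)<S$. First I would observe that, since $\{S_n\}$ is decreasing and bounded below by $1/R_0$, it converges to some $S_\ast\ge 1/R_0$; likewise $\{P_n\}$ is increasing and bounded above by $1/R_0$, so it converges to some $P_\ast\le 1/R_0$. The whole content of the proposition is then to show $S_\ast=P_\ast=1/R_0$.

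The key step is to pass to the limit in the recursion. Recall that $S_{n+1}=\Pi(S_n)$ with $\Pi=\Pi_2\circ\Pi_1$, where $\Pi_1$ is defined implicitly by the level-set equation $\Gamma(\,\cdot\,,0)=\Gamma(S_n,0)$ and $\Pi_2$ by the entry-exit relation \eqref{eq:pzero1exp1}. Both maps are continuous on their domains by the implicit function theorem (the defining equations are smooth and the relevant partial derivatives are nonzero away from $1/R_0$), and each extends continuously to the endpoint $1/R_0$ with $\Pi_1(1/R_0)=\Pi_2(1/R_0)=1/R_0$. Granting this, $\Pi$ is continuous on $[1/R_0,1)$, and letting $n\to\infty$ in $S_{n+1}=\Pi(S_n)$ yields $S_\ast=\Pi(S_\ast)$, so $S_\ast$ is a fixed point of $\Pi$. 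However, the argument following Lemma~\ref{lemm:monotone} (the case $a=P_0$, $b=1/R_0$, $x^\ast=S_1$) shows precisely that $\Pi(S)<S$ for every $S\in(1/R_0,1)$, so $\Pi$ has no fixed point in the open interval. This forces $S_\ast=1/R_0$, and then continuity of $\Pi_1$ gives $P_\ast=\Pi_1(S_\ast)=\Pi_1(1/R_0)=1/R_0$, completing the proof.

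The main obstacle I anticipate is justifying the continuous extension of $\Pi_1$ and $\Pi_2$ up to the non-hyperbolic endpoint $1/R_0$, since this is exactly where the implicit function theorem degenerates. For $\Pi_1$ this is a direct limit computation: as $S_0\to(1/R_0)^+$, the level $\Gamma(S_0,0)$ approaches the maximum value of $S\mapsto\gamma\ln S-\beta S$ attained at $S=\gamma/\beta=1/R_0$, forcing the conjugate root $\Pi_1(S_0)\to(1/R_0)^-$. For $\Pi_2$ one inspects the entry-exit integral \eqref{eq:pzero1}: writing $u_0=R_0 P_0-1$, as $u_0\to 0^-$ the integrand $u/(R_0-1-u)$ is, to leading order, $u/(R_0-1)$, so $\int_{u_0}^{p_0(u_0)}u/(R_0-1)\,\textnormal{d}u=0$ gives $p_0(u_0)\approx -u_0\to 0^+$, i.e.\ $\Pi_2(P_0)\to(1/R_0)^+$. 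Once these two continuity facts are in hand, the remaining steps are the routine monotone-convergence and fixed-point arguments above.
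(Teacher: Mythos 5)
Your proposal is correct and takes essentially the same route as the paper: the paper also combines monotone convergence (Lemma~\ref{lemm:monotone}) with the strict inequality $\Pi(S)<S$ on $(1/R_0,1)$, arguing by contradiction that a limit $S_{\lim}>1/R_0$ would have to satisfy $\Pi(S_{\lim})<S_{\lim}$, which is exactly the fixed-point step you spell out via continuity of $\Pi$ (and it handles $\{P_n\}$ ``completely analogously'' rather than through $P_\ast=\Pi_1(S_\ast)$). Your explicit verification of the continuous extension of $\Pi_1$ and $\Pi_2$ to the endpoint $1/R_0$ is a sound filling-in of details the paper leaves implicit, though for the $\{S_n\}$ part it is not strictly needed, since the fixed-point equation is only invoked when $S_\ast>1/R_0$, away from the degenerate point.
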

\begin{proof}
The convergence can be shown reasoning by contradiction, for example by looking at the sequence $S_i$. We know it is decreasing, and bounded below by $1/R_0$, so if it is not converging to this value, it must be converging to some other value $S_{\lim}>1/R_0$.
But if this is the case, $\Pi(S_{\lim})<S_{\lim}$, which contradicts the nature of $S_{\lim}$.\\
 Completely analogously we can see that $P_i \rightarrow 1/R_0$.
 \end{proof}
Extending Proposition \ref{prop:fast_slow_flow}, one can easily show that, if $S_0 > 1/R_0$ and $I_0 = \cO(\e^2)$, the orbits $\{S_\e(t), I_\e(t), t \in [0,T]\}$ for any $T$ converge for $\e \to 0$ to a finite union of orbits (under the fast flow) from $(S_n,0)$ to $(P_n,0)$, and slow flows on the $S$-axis from $(P_n,0)$ to $(S_{n+1},0)$.

The same can be shown for any initial condition, since starting from any $(S_0,I_0)$ with $I_0 > 0$, the solutions will approach a point $(S_\infty,0)$ with $S_\infty < 1/R_0$, so that setting $P_0 = S_\infty$, one can repeat the above argument.

What can we say of the orbits $\{S_\e(t), I_\e(t)\}$ for $\e$ small but fixed as $t \to \infty$? When $1/R_0 - P_n = \cO(\e)$, the argument of Lemma \ref{lemm:l} does not work. Hence, we cannot say, and indeed it is no longer true, that $I(t)$ becomes $\cO(\e^2)$ afterwards, and we cannot apply the entry-exit Lemma as above.

However, the previous argument shows that $\{S_\e(t), I_\e(t)\}$ reaches an $\e$-neighbourhood of the equilibrium $(S_E,I_E)$. Linearization at the equilibrium then shows that all trajectories of \eqref{eq:SIR1red3} starting in the set $\{(S,I)\in \mathbb{R}^2 \, |\, S\geq 0, I>0, S+I \leq 1 \}$ converge towards $(S_E,I_E)$, as already known (Theorem \ref{theor:convergence}). This analysis provides an alternative proof, valid for $\e > 0$ sufficiently small.

Biologically, the above analysis tells us that between two consecutive peaks of infection there is a long ($\mathcal{O}(1/\epsilon)$) time during which the fraction of infected population is exponentially small. On the other hand, the duration of high infected portion of the population is rather small (it occurs on the fast time scale). Ultimately, however, under the setting of this section the only possible asymptotic outcome is convergence towards the endemic equilibrium $(S_E,I_E)$ via damped oscillations. 

\subsection{SIRS model}  \label{SIRS}
We now consider a SIRS compartment model. The SIRS model is a slight modification of the SIR model and thus we keep the same notation.  The SIRS model is given by the following system:\\
\begin{minipage}{.4\textwidth}			

		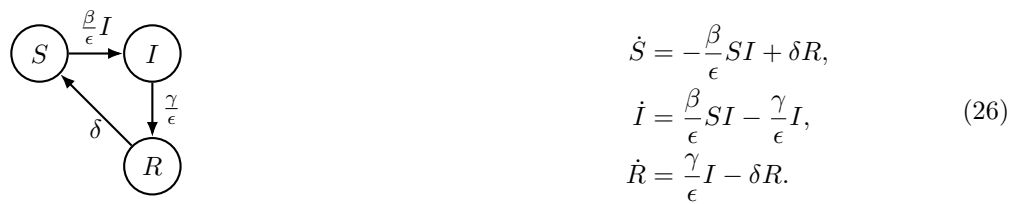
\begin{figure}[H]\centering
\begin{tikzpicture}
		\node[draw,circle,thick,minimum size=.75cm] (s) at (1,0) {$S$};
		\node[draw,circle,thick,minimum size=.75cm] (i) at (2.5,0) {$I$};
		\node[draw,circle,thick,minimum size=.75cm] (r) at (2.5,-1.5) {$R$};
		\draw[-{Latex[length=2.mm, width=1.5mm]},thick] (s)--(i) node[above, midway]{$\frac{\beta}{\epsilon}I$};
		\draw[-{Latex[length=2.mm, width=1.5mm]},thick] (i)--(r) node[right, midway]{$\frac{\gamma}{\epsilon}$};
		\draw[-{Latex[length=2.mm, width=1.5mm]},thick] (r)--(s) node[below, midway]{$\delta$};
		\end{tikzpicture}
			\caption{Flow diagram for~(\ref{eq:SIR2}).}
		\end{figure}	
	\end{minipage}
		\hfill
\begin{minipage}{.5\textwidth}
\begin{equation}\label{eq:SIR2}
\begin{aligned}
\dot{S}&= -\frac{\beta}{\epsilon}SI+\delta R,\\
\dot{I}&=\frac{\beta}{\epsilon}SI-\frac{\gamma}{\epsilon}I,\\
\dot{R}&=\frac{\gamma}{\epsilon}I-\delta R.
\end{aligned}
\end{equation}
\end{minipage}\\
\\

In this model there is no birth nor death, so the population remains constant. The small positive parameter $0< \epsilon \ll 1$ gives rise to the difference in magnitude between the large infection rate $\beta /\epsilon$, the large recovery rate $\gamma/\epsilon$ and the rate of loss of immunity $\delta$. This difference models a highly contagious disease with a short infection period with possibility of reinfection. The main distinctions with the SIR system presented in Section \ref{SIR} are the absence of demographic dynamics (no birth/death) and the possible loss of immunity (meaning that individuals can move from $R$ to $S$). As we will see shortly, however, this important biological difference does not modify the qualitative behaviour of the system.

As we noticed in Section \ref{SIR}, $\dot{N}=\dot{S}+\dot{I}+\dot{R}=0$, that is, the total population remains constant, so we assume without loss of generality $N(0)=1$, which implies $N(\tau)\equiv 1$ for all $\tau \geq 0$; this allows us, using $R=1-S-I$, to reduce the system to
\begin{equation}\label{eq:SIR2red1}
\begin{aligned}
\dot{S}&= -\frac{\beta}{\epsilon}SI+\delta (1-S-I),\\
\dot{I}&=\frac{\beta}{\epsilon}SI-\frac{\gamma}{\epsilon}I.
\end{aligned}
\end{equation}
Proceeding as in the first model, we introduce the fast time variable $t=\tau/\epsilon$, which gives
\begin{equation}\label{eq:SIR2red3}
\begin{aligned}
S'&=-\beta SI +\epsilon\delta (1-S-I),\\
I'&=I(\beta S-\gamma),
\end{aligned}
\end{equation}
where now the prime ( $'$ ) indicates the derivative with respect to $t$.

The critical manifold is, as before, the set $\mathcal{C}_0=\{(S,I)\in \mathbb{R}^2 \,|\, I=0\}$, and the slow flow along it is given by $\dot{S}=\delta(1-S)$, which implies flow towards the point $(S,I)=(1,0)$.

The $\epsilon \rightarrow 0$ limit system corresponding to \eqref{eq:SIR2red3} is
\begin{equation}\label{eq:SIR2redlim}
\begin{aligned}
S'&=-\beta SI,\\
I'&=I(\beta S-\gamma),
\end{aligned}
\end{equation}
which is exactly the limit system we obtained in Section \ref{SIR}. Hence, we can apply the same qualitative reasoning as before, with some small changes: in the perturbed system the nullcline for $S$ is slightly different, giving $I=\alpha(S)=(\epsilon\delta(1-S))/(\beta S+\epsilon\delta)$, and the value of $S_E$ is exactly $1/R_0$.

The previous ansatz for the Lyapunov function does not work here; we could find another one, following what was done in \cite{Lyap}, but we instead follow the analysis with the entry-exit function which, as we show below, does not change.

The trajectory starting from $(S_0,I_0)$, with $I_0\in\mathcal{O}(\epsilon^2)$, follows the same qualitative behaviour: after it intersects $I=\alpha(S)$ at a point $(S_\infty+\mathcal{O}(\epsilon),\mathcal{O}(\epsilon))$, it eventually intersects the horizontal line $I=I_0$. At that moment, we change the variables as before:
$$
S=\frac{u+1}{R_0}, \quad I=\epsilon v,
$$
and we obtain a system in standard form:
\begin{equation}\label{eq:uv2}
\begin{aligned}
v'&=\gamma uv,\\
u'&=\epsilon(-\beta v(u+1)+\xi(R_0-u-1-\epsilon v)).
\end{aligned}
\end{equation}
In the notation of the entry-exit function, then,
\begin{equation}
\begin{aligned}
f(v,u,\epsilon)&=\gamma u,\\
g(v,u,\epsilon)&=-\beta v(u+1)+\xi(R_0-u-1-\epsilon v),
\end{aligned}
\end{equation}
which satisfy the hypotheses in the relevant region; hence, we can compute $p_0(u_0)$ with exactly the same integral equation
\begin{equation}\label{eq:pzero2}
\int_{u_0}^{p_0(u_0)} \frac{u}{R_0-1-u}\textnormal{d}u = 0,
\end{equation}
and the procedure we followed for the SIR model can be applied to this SIRS one identically to show the global convergence to the unique equilibrium.

By following a similar analysis as the one performed so far one can also show that considering a SIRS model with demography would not change the qualitative behaviour of the system. 

The results obtained so far for the SIR and SIRS models are summarized in the following Proposition.
\begin{prop} The SIR, SIRS without and with demographic dynamics, with infection and recovery rates $\mathcal{O}(1/\epsilon)$ big compared to the other parameters, are all qualitatively equivalent. Their main common features are: 
	\begin{itemize}[leftmargin=*]
		\item boundedness of solutions in the set $\left\{ (S,I,R)\in\mathbb R^3_{\geq0}\, | \, \, 0\leq S+I+R\leq 1 \right\}$,
		\item population either constant, or converging uniformly and exponentially fast to a constant, which allows to reduce the number of compartments from $3$ $(S,I,R)$ to $2$ $(S,I)$,
		\item existence of an endemic equilibrium point of the form $(S_E,I_E)=(\frac{1}{R_0}+\mathcal{O}(\epsilon), \mathcal{O}(\epsilon))$,
		\item fast-slow decomposition in the $I$ and $S$ coordinate, respectively, $\mathcal{O}(\epsilon)$-close to the critical manifold $\cC_0=\left\{ (S,I)\in[0,1]^2\,|\,I=0 \right\}$,
		\item counterclockwise spiralling of the orbits towards $(S_E,I_E)$, and consequent absence of periodic orbits.
	\end{itemize}
These common features mean that, in the long run, the population in each of these models converges to an equilibrium $\mathcal{O}(\epsilon)$ close to $(S,I,R)=(1/R_0,0, 1-1/R_0)$, in the first octant of $\mathbb{R}^3$; each of the three variables have damped oscillations around the equilibrium value.
\end{prop}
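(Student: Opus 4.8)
The plan is to treat the Proposition as a synthesis statement and verify each listed feature for the three models in turn, reusing the complete SIR analysis of Section~\ref{SIR} and the SIRS analysis of Section~\ref{SIRS} wherever possible; the only genuinely new case is the SIRS model with demography, for which I would first write down its three-compartment system (adding a birth inflow $\xi$ into $S$ together with death terms $-\xi S$, $-\xi I$, $-\xi R$ to the SIRS equations \eqref{eq:SIR2}) and check that it fits the same template. The overarching strategy is that all three reduced planar systems share the $\e\to 0$ layer equation \eqref{eq:SIR1redlim}, so the geometric skeleton --- critical manifold, hyperbolicity splitting, fast heteroclinic orbits governed by $\Gamma$, and the entry-exit return --- is common to all of them, and only the $\cO(\e)$ corrections differ.

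First I would dispatch the mechanical features. Boundedness follows by checking that the vector field points inward on each face of the simplex $\{S+I+R\le 1\}\cap\bbR^3_{\geq 0}$, exactly as already observed for \eqref{eq:SIR1}. For the population, summing the three equations gives $\dot N = \xi(1-N)$ in the two demographic models and $\dot N = 0$ in the SIRS model without demography; hence $N$ is either constant or converges to $1$ uniformly and exponentially with the $\cO(1)$ rate $\xi$, independently of $\e$. This decoupled, normally hyperbolic convergence is precisely what justifies substituting $R=1-S-I$ and reducing to the planar $(S,I)$ systems. The endemic equilibrium is then located by solving $I'=0$ and $S'=0$: in each case $I'=0$ forces $S_E=(\gamma+\cO(\e))/\beta = 1/R_0 + \cO(\e)$, and the $S$-nullcline gives $I_E=\alpha(S_E)=\cO(\e)$. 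Linearizing the common limit \eqref{eq:SIR1redlim} along $\cC_0=\{I=0\}$ shows it is attracting for $S<1/R_0$, repelling for $S>1/R_0$, and non-hyperbolic at $1/R_0$, which is the claimed fast-slow splitting.

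The dynamical core --- counterclockwise spiralling and the absence of periodic orbits --- is where I would do the real work, and it is also where the argument is essentially identical across the models. Applying the change of variables $S=(u+1)/R_0$, $I=\e v$ to each reduced system produces a standard-form fast-slow system whose leading order is $f(0,u,0)=\gamma u$ and $g(0,u,0)>0$ on the biologically relevant range $u<R_0-1$; these are exactly the entry-exit hypotheses, so the same integral \eqref{eq:pzero1} defines the exit point $p_0(u_0)$ in all three models. I would then invoke the Poincar\'e map $\Pi=\Pi_2\circ\Pi_1$ and the monotonicity results (Lemma~\ref{lemm:monotone} together with the auxiliary comparison lemma) to conclude that the sequences $\{S_n\}$ and $\{P_n\}$ converge monotonically to $1/R_0$; the strict inequality $\Pi(S_0)<S_0$ for every $S_0\in(1/R_0,1)$ forbids any closed orbit of \eqref{eq:SIR1red3}, giving both the absence of periodic orbits and the inward counterclockwise spiral. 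Finally, once a trajectory has been driven into an $\e$-neighbourhood of $(S_E,I_E)$, linearization at the equilibrium (as in the alternative proof of Theorem~\ref{theor:convergence}) closes the convergence claim.

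The main obstacle I anticipate is not any single model but the uniformity and the passage from the singular limit $\e\to 0$ to statements valid for a fixed small $\e>0$. Concretely, the entry-exit estimate of Lemma~\ref{lemm:l} breaks down once $1/R_0-P_n=\cO(\e)$, so the clean step in which $I$ becomes $\cO(\e^2)$ fails in the innermost loops; there one cannot iterate the return map and must instead hand the orbit over to the local linear analysis at $(S_E,I_E)$. Threading these two regimes together --- the global entry-exit spiral on one side and the local stable focus on the other --- and verifying that the $\cO(\e)$ differences between the three models (the different $S$-nullclines $\alpha(S)$, the extra $\delta$ terms, and the shift $S_E=1/R_0+\cO(\e)$ versus $S_E=1/R_0$) never affect the leading-order geometry, is the delicate part; everything else is routine bookkeeping built on the already-established lemmas.
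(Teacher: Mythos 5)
Your proposal is correct and follows essentially the same route as the paper, which offers no standalone proof but presents this Proposition as a summary of the preceding analysis: the shared layer equation \eqref{eq:SIR1redlim} and constant of motion $\Gamma$, the reduction via $\dot N=\xi(1-N)$ or $\dot N=0$, the entry-exit maps $\Pi_1,\Pi_2$ with the monotone sequences $\{S_n\},\{P_n\}$ converging to $1/R_0$, and the final handover to linearization at $(S_E,I_E)$, with the SIRS-with-demography case dispatched exactly as you do, by noting it fits the same template. Your closing caveat about the breakdown of Lemma~\ref{lemm:l} once $1/R_0-P_n=\mathcal{O}(\epsilon)$ is precisely the delicate point the paper itself flags before invoking the local analysis.
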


In the next section we study a more complete (but also more complicated) epidemic model, where the techniques developed so far shall be extended. 

\subsection{SIRWS model}\label{sec:SIRWS}
We consider the SIRWS compartment model suggested by Dafilis et al.~in~\cite{Dafi}. As in the previous models, we assume that some parameters are $\mathcal{O}(\epsilon)$ small compared to others, making the corresponding processes slow, and the remaining ones fast (the changes correspond to every occurrence of $\epsilon$ in system~(\ref{eq:SIRWS})). This allows us to build on the analysis done in sections \ref{SIR} and \ref{SIRS}, and to apply the entry-exit function to a more challenging model.

The model we are concerned with in this section is given by:\\
\begin{minipage}{.4\textwidth}\centering
	\begin{figure}[H]
		\centering
		\begin{tikzpicture}
		\node[draw,circle,thick,minimum size=.75cm] (s) at (1,0) {$S$};
		\node[draw,circle,thick,minimum size=.75cm] (i) at (2.5,0) {$I$};
		\node[draw,circle,thick,minimum size=.75cm] (r) at (2.5,-1.5) {$R$};
		\node[draw,circle,thick,minimum size=.75cm] (w) at (1,-1.5) {$W$};
		\draw[-{Latex[length=2.mm, width=1.5mm]},thick] (s)--(i) node[above, midway]{$\frac{\beta}{\epsilon}I$};
		\draw[-{Latex[length=2.mm, width=1.5mm]},thick] (i)--(r) node[right, midway]{$\frac{\gamma}{\epsilon}$};
		\draw[{Latex[length=2.mm, width=1.5mm]}-,thick] (s)--++(-1,0) node[above,midway]{$\xi$};
		\draw[-{Latex[length=2.mm, width=1.5mm]},thick] (s)--++(0,1) node[left,midway]{$\xi$};
		\draw[-{Latex[length=2.mm, width=1.5mm]},thick] (i)--++(1,0) node[above,midway]{$\xi$};
		\draw[-{Latex[length=2.mm, width=1.5mm]},thick] (r)--++(1,0) node[above,midway]{$\xi$};
		\draw[-{Latex[length=2.mm, width=1.5mm]},thick] (w)--(s) node[left,midway]{$2\kappa$};
		\draw[-{Latex[length=2.mm, width=1.5mm]},thick] (w)--++(-1,0) node[above,midway]{$\xi$};
		\draw[{Latex[length=2.mm, width=1.5mm]}-,thick] ($(w)+(25:.75/2)$)--($(r)+(155:.75/2)$) node[above,midway]{$2\kappa$};
		\draw[-{Latex[length=2.mm, width=1.5mm]},thick] ($(w)+(-25:.75/2)$)--($(r)+(-155:.75/2)$) node[below,midway]{$\nu\frac{\beta}{\epsilon}I$};

		\end{tikzpicture}

			\caption{Flow diagram for~(\ref{eq:SIRWS})}
		\end{figure}
\end{minipage}
\hfill
\begin{minipage}{.5\textwidth}
\begin{equation}\label{eq:SIRWS}
\begin{aligned}
\dot{S}&= -\frac{\beta}{\epsilon}SI+2\kappa W +\xi (1-S),\\
\dot{I}&=\frac{\beta}{\epsilon}SI-\frac{\gamma}{\epsilon}I-\xi I,\\
\dot{R}&=\frac{\gamma}{\epsilon}I-2\kappa R +\nu\frac{\beta}{\epsilon}IW- \xi R,\\
\dot{W}&=2\kappa R-2\kappa W-\nu\frac{\beta}{\epsilon}IW-\xi W.
\end{aligned}
\end{equation}
\end{minipage}

\vspace{.5cm}
As in the previous models, susceptible individuals ($S(\tau)$) become infectives ($I(\tau)$) upon contact with infectious individuals, who, at rate $\gamma/\epsilon$ become immune at their first stage ($R(\tau)$), and then, at a rate $2\kappa$, become second-stage (`weakly') immune ($W(\tau)$). Weakly immune individuals may then lose totally their immunity at rate $2\kappa$, or, upon contact with infectious individuals, revert back to fully immune individuals ($R(\tau)$), thanks to the so-called immunity boosting. The constant $\nu$ is the ratio between the rate at which immunity boosting occurs in weakly immune individuals, and the rate at which susceptibles become infected. Finally, we assume a constant birth rate $\xi$, equal to the death rate, and that all individuals are born susceptible. Through the introduction of the small parameter $\epsilon$ we consider a highly contagious disease with a very short infection period, compared to other typical times of the system; indeed, the average length of the infectious period is  $\epsilon/\gamma$, while the average length of life is $1/\xi$ and the total average length of the immune period is $1/\kappa$ for individuals whose immunity is not boosted. Such relation between the parameters has been assumed, for example, for diseases such as pertussis, as described in \cite{Lavi}, where the authors estimated $\beta =260$, $\gamma=17$, $\xi = 0.01$, $\kappa =0.1$, $\nu=20$; hence, the analysis which follows may be useful in the modelling of such diseases.

Analogous to the previous models, the set $\left\{ (S,I,R,W)\in\mathbb R^4_{\geq0}\, | \, \, 0\leq S+I+R+W\leq 1 \right\}$ is invariant. We can thus scale the total population to 1, so that we can use $R=1-S-I-W$. We notice that system~(\ref{eq:SIR1}) can be recovered from system~(\ref{eq:SIRWS}) by setting $\kappa=\nu=0$, and ignoring the consequently decoupled $W$ coordinate.

As we shall describe in our analysis below, incorporating the waning state $W$ modifies considerably the dynamics of the model; in fact, it induces the possibility of periodic limit cycles, a feature that the previous simpler models did not have. This is particularly important when comparing the dynamics of the SIRWS model with that of the SIRS model where, even if recovered portions of the population may become again susceptible, there is still no ``long run periodic behaviour''.

As we have done before, introducing the fast time variable $t=\tau/\epsilon$ brings the system into the form
\begin{equation}\label{eq:SIRWSslow}
\begin{aligned}
S'&= -\beta SI+\epsilon (2\kappa W +\xi (1-S)),\\
I'&=\beta SI-\gamma I-\epsilon  \xi I,\\
R'&=\gamma I+\nu\beta IW-\epsilon (2\kappa R+ \xi R), \\
W'&=-\nu\beta IW +\epsilon (2\kappa R-2\kappa W-\xi W).
\end{aligned}
\end{equation}

\begin{remark}
	Note that the critical manifold is (similarly to the previous models) given by
	\begin{equation}
		\cC_0=\left\{ (S,I,R,W)\in[0,1]^4\,|\, I=0 \right\}.
	\end{equation}
\end{remark}

Furthermore, in the $\epsilon\rightarrow 0$ limit, $S$ and $I$ become independent of $R$ and $W$, and orbits follow the same behaviour we have seen in the fast phases of the first two models. In other words, the $(S,I)$-orbits of the layer equation follow a power level of $\Gamma(S,I)=\gamma \ln(S)-\beta (S+I)$, and converge towards $(S_\infty,0)$\footnote{We recall that $S_\infty$ is defined as the nontrivial solution of $\Gamma(S,0)=\Gamma(S_0,0)$.}. These observations motivate the following lemma.

\begin{lemm}\label{Lemma5} Consider the layer equation corresponding to \eqref{eq:SIRWSslow}. Then, as $(S,I)\to(S_\infty,0)$ one has $W\to W_\infty:=W_0 \exp^{-\nu R_0(S_0+I_0-S_\infty)}$, where $W_0=W(0)$.
	
\end{lemm}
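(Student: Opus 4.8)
The plan is to solve the $W$-component of the layer equation explicitly, thereby reducing the whole statement to the evaluation of a single integral of $I$ along the fast orbit. Setting $\epsilon=0$ in \eqref{eq:SIRWSslow}, the fourth equation becomes $W'=-\nu\beta I W$, a linear equation in $W$ with a time-dependent coefficient. Separating variables and integrating from $0$ to $t$ would give
\begin{equation}
W(t)=W_0\exp\left(-\nu\beta\int_0^t I(s)\,\textnormal{d}s\right),
\end{equation}
so that the limiting value $W_\infty$ is governed entirely by $\int_0^\infty I(s)\,\textnormal{d}s$ taken along the orbit approaching $(S_\infty,0)$.

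To evaluate that integral I would exploit the fact that the $(S,I)$-subsystem of the layer equation decouples from $R$ and $W$ and coincides with \eqref{eq:SIR1redlim}. Adding its two equations gives the identity $(S+I)'=-\gamma I$, which isolates $I$ as an exact derivative. Integrating over $[0,t]$ then yields
\begin{equation}
\int_0^t I(s)\,\textnormal{d}s=\frac{1}{\gamma}\left[(S_0+I_0)-(S(t)+I(t))\right].
\end{equation}
By Lemma \ref{lemm:Sinf} the orbit is heteroclinic with $(S(t),I(t))\to(S_\infty,0)$, so the right-hand side converges and, letting $t\to\infty$, one obtains $\int_0^\infty I(s)\,\textnormal{d}s=\frac{1}{\gamma}(S_0+I_0-S_\infty)$.

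Substituting this into the expression for $W(t)$ and recalling $R_0=\beta/\gamma$, so that $\nu\beta/\gamma=\nu R_0$, would produce exactly $W_\infty=W_0\exp\left(-\nu R_0(S_0+I_0-S_\infty)\right)$, as claimed. I do not expect any real obstacle: the argument is an exact integration rather than an estimate. The only point deserving a word of justification is the convergence of $\int_0^\infty I\,\textnormal{d}s$, which follows immediately from the monotonicity relation $(S+I)'=-\gamma I\le 0$ together with the existence of the finite limit $S_\infty$ supplied by Lemma \ref{lemm:Sinf}; in particular no Lyapunov function is needed.
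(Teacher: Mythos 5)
Your proof is correct and follows essentially the same route as the paper: both exploit the identity $(S+I)'=-\gamma I$ to evaluate $\int_0^\infty I(s)\,\textnormal{d}s=\frac{1}{\gamma}(S_0+I_0-S_\infty)$ and then integrate $W'/W=-\nu\beta I$ explicitly, concluding via $R_0=\beta/\gamma$. Your added remark justifying the convergence of $\int_0^\infty I\,\textnormal{d}s$ via monotonicity is a small refinement over the paper, which simply invokes $\lim_{t\to+\infty}I(t)=0$.
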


\begin{proof}
We note that 
$$
\int_{0}^{\infty} \bigg(S'(u)+I'(u)\bigg)\textnormal{d}u = -\gamma \int_{0}^{\infty} I(u) \textnormal{d}u \implies S_0+I_0-S_\infty = \gamma \int_{0}^{\infty} I(u) \textnormal{d}u,
$$
due to the fact that $\lim_{t\rightarrow +\infty} I(t)=0$. Next, note from \eqref{eq:SIRWSslow} that in the limit $\epsilon=0$ one has $\frac{W'}{W}=-\nu\beta I$, which implies $W(t)=W_0\exp^{-\nu \beta\int_{0}^{t} I(u) \textnormal{d}u}$. Letting $t\to \infty$ leads to the result, recalling that $R_0=\frac{\beta}{\gamma}$.
\end{proof}

Since we have already shown that the layer equation is in the $(S,I)$-coordinates the same as before, we proceed just in the same way, that is, we apply first the change of coordinates

$$
S=\frac{u+1}{R_0}, \quad I=\epsilon v,
$$
which gives a system in standard singular perturbation form, with $u, W$ slow and $v$ fast, namely
\begin{equation}\label{eq:uvW1}
\begin{aligned}
v'&=(\gamma u-\epsilon \xi)v=: f(v,u,\epsilon)v,\\
u'&=\epsilon(-\beta v(u+1)+2\kappa R_0 W+\xi(R_0-u-1))=:\epsilon g(v,u,W,\epsilon),\\
W'&=\epsilon(-\nu\beta v W+ 2\kappa -2\kappa \frac{u+1}{R_0} -4\kappa W -\xi W)+\mathcal{O}(\epsilon^2).
\end{aligned}
\end{equation}

And, accordingly, in the slow time scale $\tau$:
\begin{equation}\label{eq:uvW2}
\begin{aligned}
\epsilon\dot{v}&=(\gamma u-\epsilon \xi)v,\\ 
\dot{u}&=-\beta v(u+1)+2\kappa R_0 W+\xi(R_0-u-1),\\
\dot{W}&=-\nu\beta v W+ 2\kappa -2\kappa \frac{u+1}{R_0} -4\kappa W -\xi W+\mathcal{O}(\epsilon).
\end{aligned}
\end{equation}

Naturally, the critical manifold in these new coordinates is $\mathcal{C}_0=\left\{(u,v,W)\in \mathbb{R}^3 \, | \, v=0\right\}$. 

In order to use the entry-exit formula, as described in \cite[ equation (12)]{HsuRuan}, we first check that indeed
\begin{equation}\label{eq:hypo}
\begin{aligned}
g(0,u,W,0)&=2\kappa W R_0 +\xi(R_0-u-1)>0, \\
f(0,u,0)&=\gamma u \lessgtr 0 \iff u \lessgtr 0.
\end{aligned}
\end{equation}

However, the presence of $W$ in the equation for $\dot{u}$ makes the entry-exit integral 
\begin{equation}\label{eq:pzero3}
\int_{u_0}^{p_0(u_0)} \frac{u}{2\kappa W(u) R_0+\xi(R_0-u-1)}\textnormal{d}u = 0
\end{equation}
not immediately computable, as we would need to find and expression for $W(u)$. To deal with this issue, let us look at the $(S,W)$-dynamics in the slow time variable $t$ on the critical manifold $I=0$:
\begin{equation}\label{eq:SW}
\begin{aligned}
\dot{S}&=2\kappa W +\xi(1-S),\\
\dot{W}&=2\kappa (1-S)-(4\kappa +\xi)W.\\
\end{aligned}
\end{equation}
This system of ODEs can be solved explicitly, assuming initial conditions \linebreak $(S(0),W(0))=(S_\infty,W_\infty)$, the limit values of the fast loop, we have:
\begin{equation}\label{eq:SWsol}
\begin{aligned}
S(\tau)&=1+[S_\infty -1+2\kappa(S_\infty+W_\infty-1)\tau]\exp(-(2\kappa+\xi)\tau),\\
W(\tau)&=[W_\infty-2\kappa(S_\infty+W_\infty-1)\tau]\exp(-(2\kappa+\xi)\tau)\\
&=1-S(\tau)-(1-S_\infty-W_\infty)\exp(-(2\kappa+\xi)\tau).
\end{aligned}
\end{equation}
The phase-portrait of \eqref{eq:SW} is illustrated in Figure~\ref{fig:sloflo}, where the only feasible region is the triangle $0\leq S+W \leq 1$, $S,W\geq 0$, and all trajectories converge to $(S,W)=(1,0)$.
\begin{figure}[H]\centering
	\begin{tikzpicture}
		\node at (0,0){
			\includegraphics[scale=1]{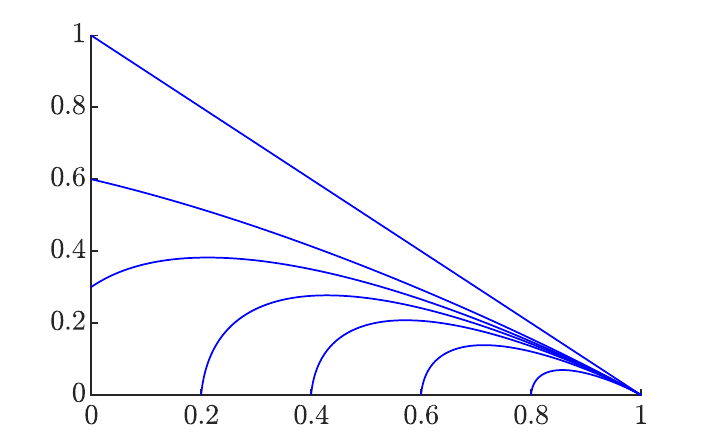}
		};
		\node at (0,-2.5) {$S$};
		\node at (-3.5,0) {$W$};
	\end{tikzpicture}
	\caption{Phase plane for the $S,W$ couple; values for $\kappa=0.1$ and $\xi=0.0125$ taken from \cite{Dafi}}	
		\label{fig:sloflo}
\end{figure}

% \textit{INCLUDE THIS LAST PART? The only case in which we can compute the integral \eqref{eq:pzero3} directly is along the orbit given by $W=1-S$, for which the entry-exit integral \eqref{eq:pzero3} now reads as
% \begin{equation}\label{eq:pzero4}
% \begin{aligned}
% &\frac{1}{(2\kappa +\xi)}\int_{u_0}^{p_0(u_0)} \frac{u}{R_0-u-1}\textnormal{d}u=0,
% \end{aligned}
% \end{equation}
% which is exactly the same entry-exit integral as in the first two models.}

Note that, in general, the integral \eqref{eq:pzero3} is not explictly computable. Hence,  let $\textnormal{d}u=[2\kappa R_0 W+\xi(R_0-u-1)]\textnormal{d}\tau$; then one can transform \eqref{eq:pzero3} into an integral equation which provides the exit time $T_E$, namely, after substituting $\textnormal{d}u=[2\kappa R_0 W+\xi(R_0-u-1)]\textnormal{d}\tau$ in \eqref{eq:pzero3} one has
$$
\begin{aligned}
&\int_{0}^{T_E} u(\tau)\textnormal{d}\tau= 0.
\end{aligned}
$$

In other words, $T_E$ is defined as the time it takes to go from $u=u_0$ to $u=p_0(u_0)$, and therefore it is also the time during which a trajectory of \eqref{eq:SIRWSslow} stays $\mathcal{O}(\epsilon^2)$-close to the critical manifold. This implies, remembering $u(\tau)=R_0S(\tau)-1$, that
\begin{equation}\label{eq:pzerot}
\int_{0}^{T_E} (R_0S(\tau)-1)\textnormal{d}\tau= 0.
\end{equation}
Using the explicit equation for $S(\tau)$ given in~(\ref{eq:SWsol}), and introducing, for ease of notation, $A:=2\kappa+\xi$, $B:=2\kappa(S_\infty+W_\infty-1)$, 
$C:=S_\infty-1$ so that
$$
S(\tau)=1+C\exp(-A\tau)+Bt\exp(-A\tau),
$$
the equation for the exit time $T_E$~(\ref{eq:pzerot}) becomes
\begin{equation}\label{eq:TEE}
-\frac{R_0\exp(-AT_E)(ABT_E+AC+B)}{A^2}+(R_0-1)T_E+\frac{R_0(AC+B)}{A^2}=0.
\end{equation}
Clearly $T_E=0$ is a solution. Moreover, there is only one strictly positive solution, since $S(\tau)$ is strictly increasing and tends to $1$ as $\tau\rightarrow+\infty$. Such solution provides the exit time.

Substituting the positive solution $T_E$ of~(\ref{eq:TEE}) it in~(\ref{eq:SWsol}) we obtain the exit point $(S(T_E),W(T_E))$. However, due to the implicit formulae we have obtained above, such a computation is only suitable numerically (see Section \ref{sec:po}). Despite the previous obstacle, we can still check how the exit points depend on certain parameters. For example, from the first equation of~(\ref{eq:SWsol}) we observe that
\begin{equation}\label{eq:dsdxi}
	\frac{\partial S}{\partial \xi} (\tau, \xi) = -\tau[S_\infty - 1 +2\kappa(S_\infty + W_\infty -1)\tau]\exp^{-(2\kappa+\xi)\tau}>0,
\end{equation}
which immediately suggests that the exit time is decreasing in $\xi$. Namely,  let $T_{E,i}$ denote the exit time with $\xi=\xi_i$ and $i=1,2$. If $\xi_1<\xi_2$ then, using \eqref{eq:dsdxi}, one sees that $T_{E,1}>T_{E,2}$.

To provide more insight on the dynamics of the SIRWS model, we are now going to complement our previous study with a numerical analysis, where the computed exit time $T_E$ shall play an essential role.

\subsubsection{Periodic orbits}\label{sec:po}

Recall that in the SIR and SIRS models no periodic trajectories are possible. In this section we show that the SIRWS does have periodic solutions, and of particular biological relevance, stable limit cycles. Our motivation is that if a stable limit cycle exists, then a disease would have periodic outbursts. Furthermore, due to the time scales present in the model, there is the danger of missing such periodicity if only short time scale analysis is considered. Moreover, information regarding the parameter regions in which damped/sustained oscillations occur can give directions as to which parameter(s) to modify in order to have a desired control of the epidemic.

As it is usual in GSPT, the general idea to show existence of limit cycles of the perturbed (fast-slow) system is to first find a singular cycle, see for example \cite{KoSz,taghvafard2019geometric}. A singular cycle is a concatenation of limiting slow and fast orbits that form a cycle. Afterwards, given that some conditions are met, we argue that such singular cycle gives rise to a limit cycle of the fast-slow system. We further remark that a mixture of analytical and numerical methods is relevant since we have to combine local analytical results with global numerical results, which is a key theme in multiple time scale systems~\cite{GuckenheimerWechselbergerYoung,Haiduc1,KuehnRetMaps}.

The steps to form a singular cycle of the SIRWS model are as follows:

\begin{enumerate}[leftmargin=*]
	\item Choose a section $J_1=\left\{ (S,I,W)=(S_0,0,W)\, |\, S_0>\frac{1}{R_0}, \, W\in(0,1-S_0) \right\}$. This section is transversal to the reduced slow flow and is located on the unstable region of the critical manifold.
	\item Consider the map $\Pi_1$ defined by the layer equation. Under such a map one obtains a new section on the critical manifold $J_2:=\Pi_1(J_1)$. The coordinates of $J_2$ are given by $(S_\infty,0,W_\infty)$, as in Lemma \ref{Lemma5}.
	\item Consider the map $\Pi_2$ defined by the slow flow \emph{for a time} $T_E$ implicitly given by \eqref{eq:TEE}, i.e. $\Pi_2(J_2) = (S(T_E),W(T_E))$ with $(S(\tau),W(\tau))$ given by \eqref{eq:SWsol}, and let $J_3:=\Pi_2(J_2)$. Recall from the last part of section \ref{sec:SIRWS} that we can tune the exit time, for example, by changing the parameter $\xi$, without changing the map $\Pi_1$.
	\item If $J_3$ intersects transversally $J_1$, then we have a robust singular cycle given precisely by the orbit corresponding to a fixed point of $\Pi_2 \circ \Pi_1$, see Figure \ref{fig:sing_cycle} for a schematic representation of these four arguments. 

In the present context, robust means that the singular cycle persists under small smooth perturbations as a periodic orbit of the fast-slow system precisely due to the transverse intersection of $J_1$ and $J_3$ \cite{thom1954quelques} (if it occurs).

It is clear that for the particular SIRWS model, there is a priori no guarantee that such a transverse intersection occurs for a particular set of parameters and initial conditions. To clarify that indeed such a fixed point exists upon variation of parameter values, we refer to the situation shown in Figure \ref{fig:numerics} varying the parameter $\xi$, we argue as follows: let 
	$ F^\xi = (F_1^\xi,F_2^\xi)= \Pi_2 \circ \Pi_1 :\cC_0\to\cC_0$ using the parameter $\xi$, and $X = \{\xi\ :\ J_3 \cap J_1 \not = \emptyset\}$. We can then define, for $\xi \in X$, $\bar w (\xi)$ as the value of $w$ such that $F_1^\xi(S_0,w) = S_0$. Note moreover that for all $w$, the inequalities $0 < F_2^\xi(S_0,w) < 1 - S_0$ hold, as can be seen by \eqref{eq:SWsol}.\\
	Consider finally 
	$$ g : X \to \mathbb{R},\quad g(\xi) = \bar w(\xi) - F_2^\xi(S_0,w) $$
	If $X = [\xi_1,\xi_2]$, we have $\bar w(\xi_1)=0$ and $\bar w(\xi_2)=1-S_0$, or vice versa. Hence $g(\xi_1) < 0 < g(\xi_2)$, or vice versa. In either case, there exists $\bar \xi \in (\xi_1,\xi_2)$ such that $g(\bar \xi) = 0$, i.e. $F_1^{\bar \xi}(S_0,\bar w(\bar \xi)) = S_0$ and $F_2^{\bar \xi}(S_0,\bar w(\bar \xi)) = \bar w(\bar \xi)$ as claimed.\\

Moreover, since we know that both $\Pi_1$ and $\Pi_2$ are contractions in the $W$-direction (refer to \eqref{eq:SIRWSslow} and to Figure \ref{fig:sloflo}), such a singular cycle is locally attracting. Hence it persists as a locally attracting periodic orbit for $\epsilon>0$ sufficiently small. We remark, however, that this does not mean that there are no other limit cycles for $\epsilon>0$ sufficiently small. As we show in our numerical analysis of the forthcoming section, there is in fact a range of parameter for which a stable and an unstable limit cycle co-exist. The existence of the unstable limit cycle, however, does not follow from our previous perturbation arguments.
\end{enumerate}

\begin{figure}[htbp]\centering
	\begin{tikzpicture}
		\node at (0,0){
		\includegraphics[scale=1.75]{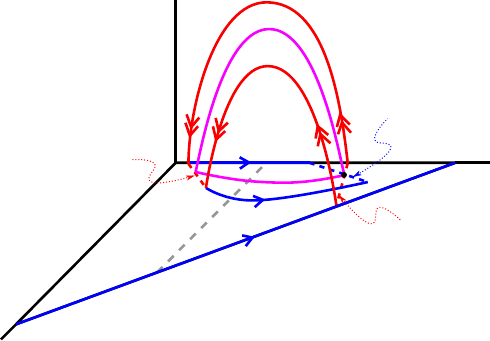}
		};
		\node at (4.5,0.1) {$S$};
		\node at (-1.25,3.25) {$I$};
		\node at (-4.5,-3.25) {$W$};
		\node at (3.,-1) {$J_1$};
		\node at (2.5,1.1) {$J_3$};
		\node at (-2.25,0.2) {$J_2$};
		\node at (-2.9,-2.) {$\cC_0^{\textnormal{a}}$};
		\node at (-.45,-1.1) {$\cC_0^{\textnormal{r}}$};
	\end{tikzpicture}
	\caption{Schematic representation of the singular cycle, shown in magenta. The red arrows depict the map $\Pi_1:(S_0,W_0)\mapsto (S_\infty,W_\infty)$ so that $\Pi_1(J_1)=J_2$. The blue arrows depict the map $\Pi_2$ given by the reduced flow on $\cC_0$ and induced by \eqref{eq:SW} (for a finite time $T_E(S_\infty,W_\infty)$) so that $\Pi_2(J_2)=\Pi_2(\Pi_1(J_1))=J_3$. If the sections $J_1$ and $J_3$ intersect, then such an intersection defines closed singular orbits. If $J_1$ and $J_3$ intersect transversally, then such intersection persists for $\epsilon>0$ sufficiently small giving rise to a periodic orbit of the SIRWS model.}
	\label{fig:sing_cycle}
\end{figure}

Naturally, the above procedure is only sufficient to show existence of limit cycles that pass close to the critical manifold and provides no information on other possible limit cycles of the fast-slow system, compare with \cite{PMID:29434506}. Yet our attention is precisely focused on describing those limit cycles arising from the time scale separation.

An example of the above procedure is shown in Figure \ref{fig:numerics} where we set $\{\beta =260, \gamma =17, \kappa=0.1, \xi=0.0125, \nu=5 \}$, values taken from \cite{Dafi}. Figures in the left column show the evolution of $J_1$ (dashed red) in the fast system (red) and of $J_2$, too small to be visible, in the slow system (blue). Figures in the right column zoom to the interval $J_3$ (blue) for each parameter value, and its position relative to $J_1$ (dashed red). Note that
\begin{itemize}[leftmargin=*]
	\item For $\xi=0.01$ (Figures \ref{fig:numerics} (a) and (b)) the interval $J_3$ lies to the right of $J_1$, so there might be a larger limit cycle further away from $J_1$.

    \item For $\xi=0.0125$ (so Figures \ref{fig:numerics} (c) and (d)) the interval $J_3$ intersects transversally $J_1$, and the intersection certifies the existence the singular periodic orbit.
    
    \item For $\xi=0.015$ (so Figures \ref{fig:numerics} (e) and (f)) the interval $J_3$ lies to the left of $J_1$, so there might be a smaller limit cycle further away from $J_1$, or the system might converge to the unique equilibrium point in the first octant.
\end{itemize}

It is worth noting that we chose to investigate the role of $\xi$, the birth/death rate, due to its biological relevance. However, by the same method one is able to numerically approach the existence of limit cycles upon variation of any other parameter. It is important to note that, in the limit systems, there is a clear separation between ``fast parameters'' ($\beta$, $\gamma$, $\nu$) and ``slow parameters'' ($\xi$, $\kappa$); changing a single parameter will only influence either the layer or the reduced dynamics, and not both.

\begin{figure}[htbp]
	\centering
	\begin{minipage}{\textwidth}
		\begin{subfigure}{.5\textwidth}
			\centering
			\includegraphics[scale=.85]{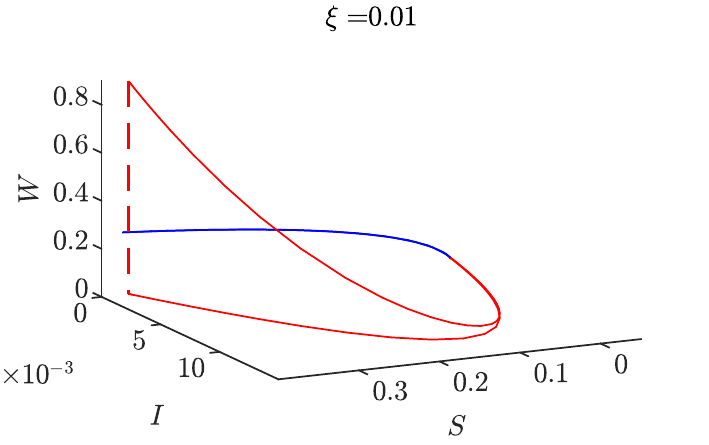}
			\caption{}
		\end{subfigure}
	\begin{subfigure}{0.5\textwidth}
		\centering
		\includegraphics[scale=.85]{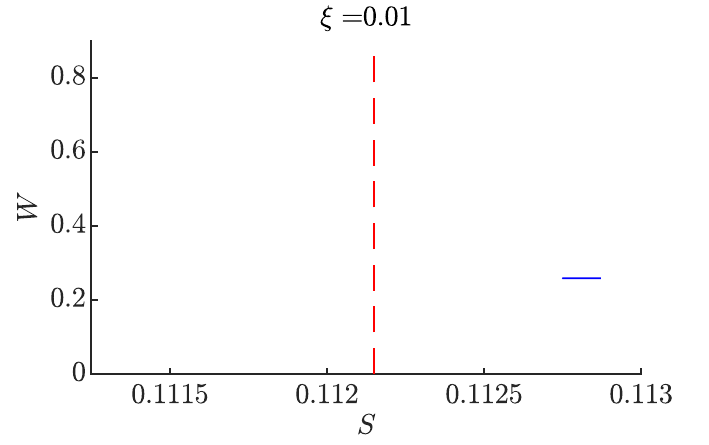}
		\caption{}
	\end{subfigure}%    
	\end{minipage}\\
		\begin{minipage}{\textwidth}
		\begin{subfigure}{.5\textwidth}
			\centering
			\includegraphics[scale=.85]{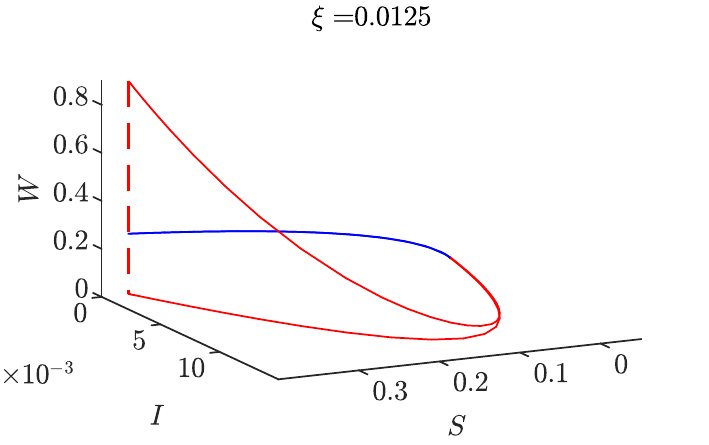}
			\caption{}
		\end{subfigure}
		\begin{subfigure}{0.5\textwidth}
			\centering
			\includegraphics[scale=.85]{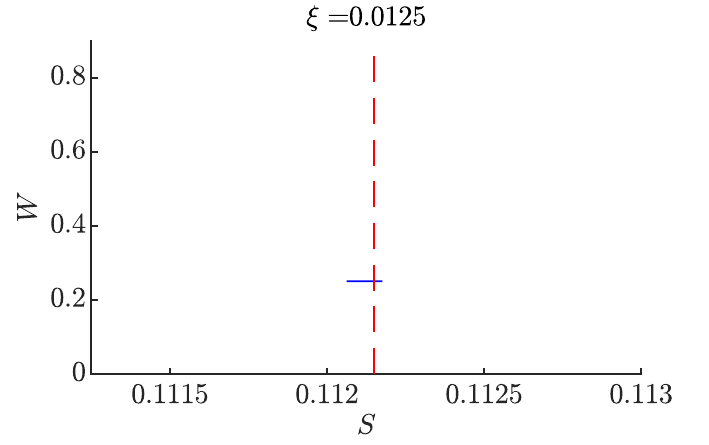}
			\caption{}
		\end{subfigure}%    
	\end{minipage}\\
	\begin{minipage}{\textwidth}
	\begin{subfigure}{.5\textwidth}
		\centering
		\includegraphics[scale=.85]{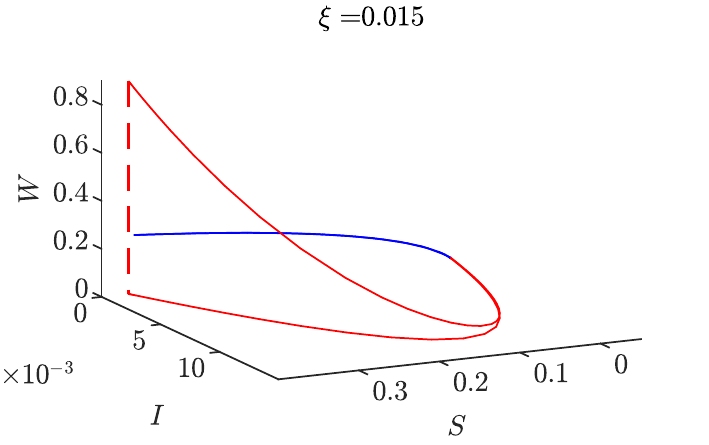}
		\caption{}
	\end{subfigure}
	\begin{subfigure}{0.5\textwidth}
		\centering
		\includegraphics[scale=.85]{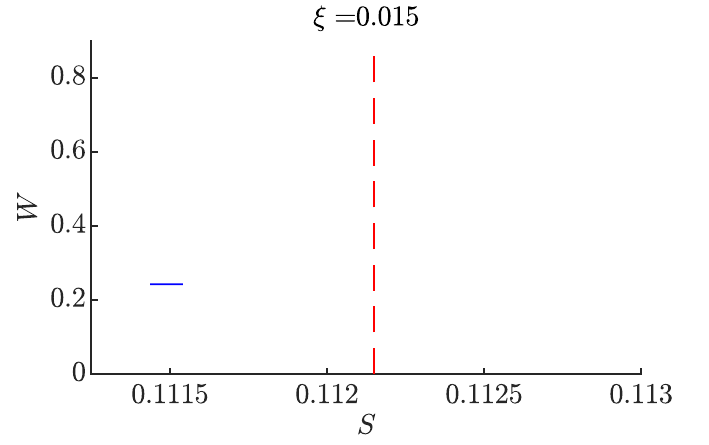}
		\caption{}
	\end{subfigure}%    
\end{minipage}
\caption{Numerical illustration of the effect of changing $\xi$ on the slow dynamics. This numerical analysis shows that there is an interval around $\xi\sim0.0125$ for which periodic orbits of \eqref{eq:SIRWSslow} exists, for $\epsilon>0$ sufficiently small.}
\label{fig:numerics}
\end{figure}

Since we have already demonstrated the existence of limit cycles, the next question to investigate is the possible bifurcations that may arise upon variation of the parameters. Such analysis is presented in the forthcoming section.

\subsubsection{Bifurcation analysis}

In this section we carry out a bifurcation analysis, motivated by the one developed in \cite{Dafi}, which we perform with MatCont \cite{MatCont}. Our goal is to investigate the way the bifurcation diagrams change as $\epsilon$ is decreased, i.e., we want to understand via numerical continuation how the fast-slow singular limit is approached; see also~\cite{DesrochesKrauskopfOsinga1,GuckenheimerKuehn1,IuorioKuehnSzmolyan} where such a strategy has considerably improved our understanding of several fast-slow models. In our context, decreasing $\epsilon$ means, from a biological point of view, modelling an epidemiological system in which the difference in duration between life expectancy and infectious episodes becomes large. In the limit as $\epsilon\rightarrow 0$, infectious episodes become instantaneous, and the analysis of this limit case helps to understand the behaviour of the system for $\epsilon>0$ small enough.

In fact, we note that the system studied in \cite{Dafi} is system~(\ref{eq:SIRWSslow}), for the particular choice of $\epsilon=1$. In what follows, we set $\beta =260,\ \gamma =17,\ \kappa=0.1$, as in \cite{Dafi}, and vary $\epsilon$, $\xi$, $\nu$, and later $\beta$ as well.
Notice that the values of the parameters $\beta,\ \gamma,\ \kappa$ and $\xi$ already appear of different order of magnitude. It would be possible to use a different parametrization, letting $\tilde \beta = 0.26$, $\tilde \gamma = 0.017$ and $\epsilon = 0.001$. All the following analysis would be identical, except that the values obtained for $\epsilon$, $\beta$ and $\gamma$ would be multiplied by $10^{-3}$.

For consistency, we start by replicating Figure 5 from \cite{Dafi}, by setting $\epsilon=1$ and $\xi=0.01$, in Figure \ref{fig:periods}. For all parameter values there is a unique  equilibrium in $\mathbb R^4_{\geq0}$, as can be easily proved, but its stability changes varying $\nu$ through a subcritical and a supercritical Hopf bifurcation.

Next, in order to get the dependence of the bifurcation points with respect to $\epsilon$, we continue the two Hopf points $H_1$ and $H_2$ and the Limit Point of Cycles (LPC) $L$ in a $(\nu,\epsilon)$ bifurcation diagram, obtaining the diagram shown in Figure \ref{fig:bifurc}.

\begin{figure}
	\begin{subfigure}[t]{.45\textwidth}\centering
		\begin{tikzpicture}
		\node at (0,0){\includegraphics{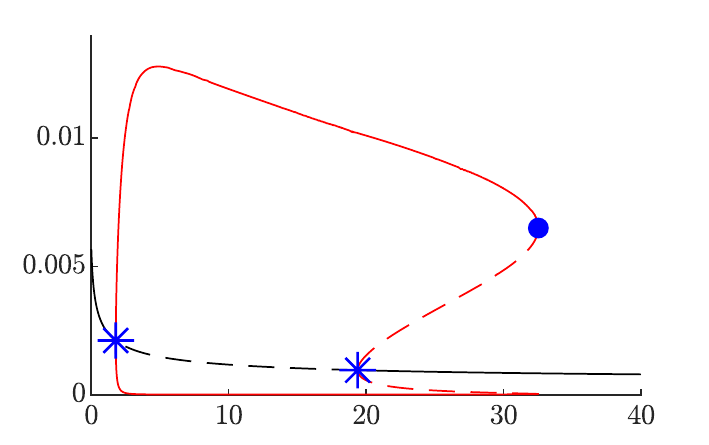}};
		\node at (-3.5,0){$I$};
		\node at (0,-2.5){$\nu$};
		\node[blue] at (-0.1,-1.1){$H_2$};
		\node[blue] at (-2,-1.1){$H_1$};
		\node[blue] at (2.2,0){$L$};
	\end{tikzpicture}
	\caption[]{One-parameter ($\nu$) bifurcation diagram for \eqref{eq:SIRWSslow}: blue stars labelled $H_1$ and $H_2$ correspond to Hopf points; blue dot labelled $L$ corresponds to the Limit Point of Cycles (LPC); red lines correspond to stable (solid) and unstable (dashed) limit cycles; the stable (solid) and unstable (dashed) equilibrium point is depicted by the black line.}
	\label{fig:periods}
	\end{subfigure}\hfill
	\begin{subfigure}[t]{.45\textwidth}\centering
		\begin{tikzpicture}
		\node at (0,0){\includegraphics[scale=1.]{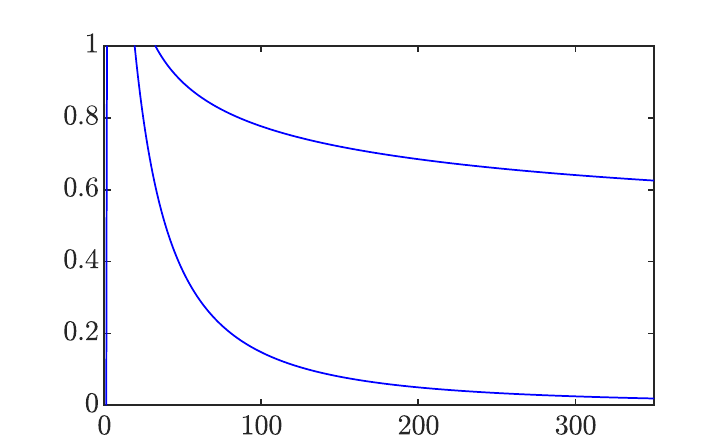}};
		\node at (-3.25,0){$\epsilon$};
		\node at (0,-2.5){$\nu$};
		\node at (-1.8,.7){\small $H_2$};
		\node at (-2.2,-1){\small $H_1$};	
		\node at (-.7,1.1){\small $L$};
	\end{tikzpicture}	
		\caption[]{The blue lines represent the Hopf points $H_1$ and $H_2$, and the LPC point $L$, plotted in Figure~(\ref{fig:periods}), which are then continued while decreasing $\epsilon$; compare with Figure \ref{fig:periods}. We observe that $H_1$ does not tend to $\nu=0$ as $\epsilon\to0$ while $H_2$ and $L$ diverge.}
		\label{fig:bifurc}
	\end{subfigure}
	\caption{One and two parameter bifurcation diagrams for \eqref{eq:SIRWSslow}.}
	\label{fig:bifs1}
\end{figure}

We notice from Figure \ref{fig:bifurc} that $H_1$ converges to a positive value for $\nu\sim 1.32$ as $\epsilon \rightarrow 0$, while $H_2$ and $L$ diverge; the latter much faster than the former. Moreover, we know from the analysis performed in Section \ref{sec:SIRWS} that as $\epsilon\to0$ the equilibrium curve (black curve in Figure \ref{fig:periods}) approaches the $\left\{ I=0 \right\}$ axis. These two observations suggest that as $\epsilon\to0$ the bifurcation diagram on Figure \ref{fig:periods} gets stretched. One must also point out that the computation of the bifurcation diagrams for small $\epsilon$ becomes considerably expensive due to the high stiffness of the problem.

We next produce the analogous to Figure~\ref{fig:periods}, but for a smaller value of $\epsilon$, namely $\epsilon=0.05$, in Figure~\ref{fig:per_005}. In order to do so, due to stiffness of the problem, it is necessary to rescale the system by introducing a new variable $v=\ln(I)$. We emphasize that this rescaling is motivated by the fact that trajectories get exponentially close to the critical manifold, recall Lemma \ref{lemm:l}. Moreover, this rescaling might be useful for bifurcation analysis of systems with similar dynamics in which an exchange of stability of the critical manifold occur at a non-hyperbolic point, and trajectories of interest pass exponentially close to such a singularity. With the aforementioned rescaling one obtains the following system of ODEs:
\begin{equation}\label{eq:SIRWSlog}
\begin{aligned}
S'&= -\beta S\e^v+\epsilon (2\kappa W +\xi (1-S)),\\
v'&=v(\beta S-\gamma -\epsilon  \xi),\\
W'&=-\nu\beta W\e^v +\epsilon (2\kappa (1-S-\e^v-W)-2\kappa W-\xi W).
\end{aligned}
\end{equation}

\begin{figure}[htbp]
	\centering
	\begin{tikzpicture}
      \node at (0,0){\includegraphics[scale=1.15]{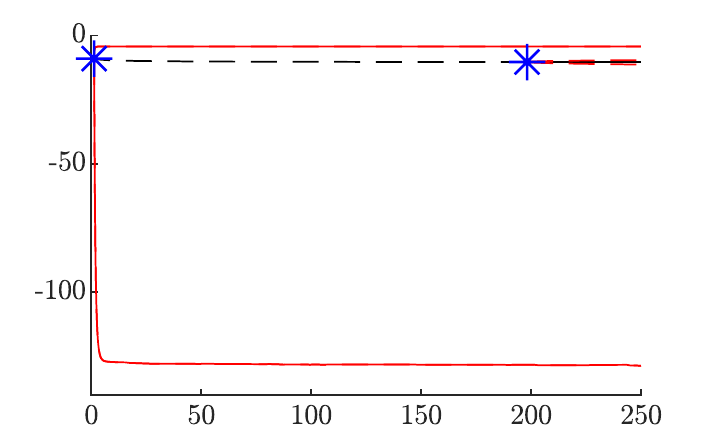}};
		\node at (-3.75,0){$v$};
		\node at (0,-2.75){$\nu$};
		\node[blue] at (1.7,1.5){$H_2$};
		\node[blue] at (-2.7,1.5){$H_1$};
		\end{tikzpicture}	
		\caption{One-parameter ($\nu$) bifurcation diagram for \eqref{eq:SIRWSlog}: blue stars labelled $H_1$ and $H_2$ correspond to Hopf points; red lines correspond to stable (solid) and unstable (dashed) limit cycles; the stable (solid) and unstable (dashed) equilibrium point is depicted by the black line.}
				\label{fig:per_005}
\end{figure}

Thus, the bifurcation diagram in Figure~\ref{fig:per_005} is obtained from \eqref{eq:SIRWSlog} and confirms the behaviour anticipated in Figure~\ref{fig:bifurc}: as $\epsilon$ decreases, the distance between $H_1$ and $H_2$ increases, thus stretching the parameter region in which stable periodic solutions are to be observed. Most importantly, as is already evident in Figure \ref{fig:bifurc}, we have that for $\epsilon$ sufficiently small the LPC is undetectable, implying that an eventual transition to stable (endemic) equilibrium due to increase of the immunity boosting rate $\nu$ is not possible any more. 

Another important parameter is $\beta$, which regulates the infection rate. Thus, in order to further investigate the role of $\epsilon$ in the model, we next present in Figure \ref{fig:betanu_bifurc} a $(\nu,\beta)$ bifurcation diagram.

\begin{figure}[H]\centering
	\begin{tikzpicture}
	\node at (0,0){\includegraphics{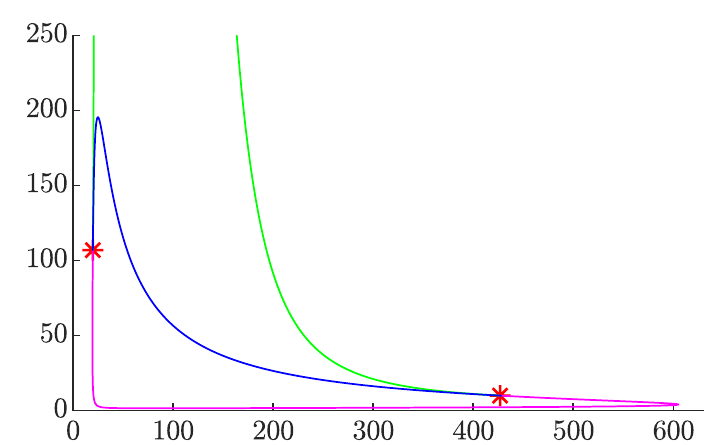}};
	\node at (-3.5,0){$\nu$};
	\node at (0,-2.5){$\beta$};
	\node at (-2.35,-1){1};
	\node at (-1.75,0){2};
	\node at (0,0){3};
	\node[red] at (1.8,-1.4){\small $GH_1$};
	\node[red] at (-2.3,-0.5){\small $GH_2$};
	\end{tikzpicture}\hfill
	\begin{tikzpicture}
	\node at (0,0){\includegraphics{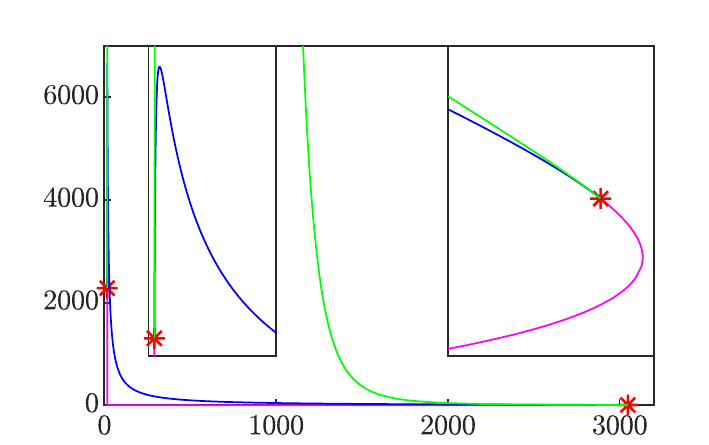}};
	\node at (-3.5,0){$\nu$};
	\node at (0,-2.5){$\beta$};
	\node at (-2.45,-1.6){1};
	\node at (-1.75,-1.6){2};
	\node at (0,0){3};
	\node[red] at (2,0){\small $GH_1$};
	\node[red] at (-1.5,-1.1){\small $GH_2$};
	\end{tikzpicture}
	\caption{Two parameter bifurcation diagram for \eqref{eq:SIRWSslow}. Left and right represent $\epsilon=1$ and $\epsilon=0.05$, respectively. The red points labelled $GH_i$ are generalised Hopf points. The blue (resp. magenta) branch is a curve of subcritical (resp. supercritical) Hopf bifurcation while the green branches correspond to limit point of cycles. We label the regions in the diagram according to the attractor as 1: Limit cycles, 2: Bistability, and 3: Point attractor. The insets in the right picture are ``zoom-ins'' near the two $GH$ points.} 
	\label{fig:betanu_bifurc} 
\end{figure}

For ease of notation, let us denote by $\nu(P)$ the value of $\nu$ corresponding to a point $P$. From Figure \ref{fig:betanu_bifurc} we have that $\nu(GH_1) \approx 9.96$ and $\nu(GH_2) \approx 106.9$ for $\epsilon=1$. Furthermore, for $\nu \leq \nu(GH_1)$, the system only exhibits stability of the equilibrium or of the limit cycle (zones $1$ and $3$). For $\nu(GH_1)< \nu \leq \nu(GH_2)$ there are two intervals of values for $\beta$ which correspond to a stable equilibrium, one to a stable limit cycle and one to bistability (zones $1$, $2$, and $3$). For $\nu(GH_2) < \nu \leq \nu_{\max}$, with $\nu_{\max} \approx 195.46$, there are two intervals of values for $\beta$ which correspond to a stable equilibrium, one to a stable limit cycle and two to bistability, one of them being very thin. At $\nu = \nu_{\max}$  the two Hopf points $H_1$ and $H_2$ collide, and a codimension-2 Hopf-Hopf bifurcation occurs.

For $\epsilon=0.05$, the diagram is qualitatively the same, but as already pointed-out before the diagram gets stretched both in $\beta$ and in $\nu$. The points $GH_1$ and $GH_2$ correspond now to $\nu \approx 7.04$ and $\nu \approx 2282.6$, respectively. In particular, the bistability region $2$ is enlarged.

To complement the previous description, and similar to Figure 9 (a) to (d) in \cite{Dafi} in Figures~\ref{fig:nu6}-\ref{fig:nu190}, we present the $\beta$-bifurcation diagram for different values of $\nu$ and continue all the Hopf points for decreasing $\epsilon$, as shown in Figures~\ref{fig:nu6cont}-\ref{fig:nu190cont}.

\begin{figure}[htbp] 
	\begin{subfigure}[b]{0.33\linewidth}
		\centering
			\begin{tikzpicture}
		\node at (0,0){\includegraphics[width=.9\linewidth]{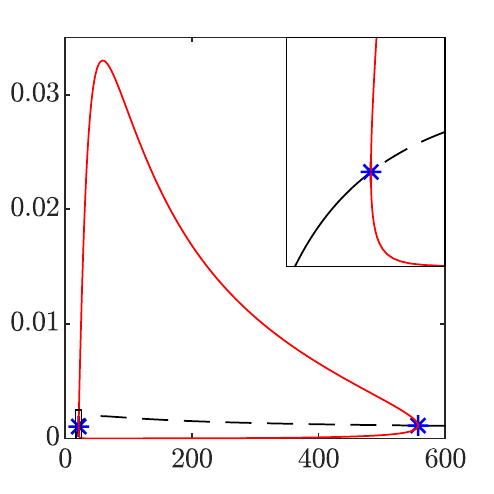}};
		\node at (0,-2.5){\small $\beta$};
		\node at (-2,0){\small $I$};
		\node[blue] at (-1.1,-1.3){\small $H_1$};
		\node[blue] at (1.3,-1.3){\small $H_2$};
		\node[blue] at (0.95,1){\small $H_1$};
				\end{tikzpicture} 
		\caption{$\nu=6$} 
		\label{fig:nu6} 
		\end{subfigure}%%
	\begin{subfigure}[b]{0.33\linewidth}
		\centering
		\begin{tikzpicture}
		\node at (0,0){\includegraphics[width=.9\linewidth]{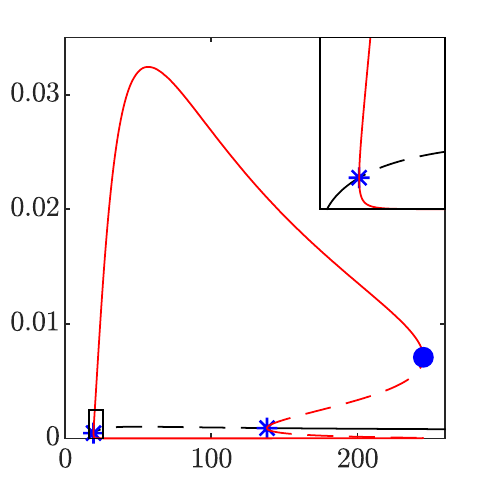}};
		\node at (0,-2.5){\small $\beta$};
		\node at (-2,0){\small $I$};
		\node[blue] at (-1.1,-1.3){\small $H_1$};
		\node[blue] at (1,1){\small $H_1$};
		\node[blue] at (0.2,-1.3){\small $H_2$};
		\node[blue] at (1.65,-1.3){\small $L_1$};
		\end{tikzpicture}  
		\caption{$\nu=40$} 
		\label{fig:nu40} 
		\end{subfigure} 
	\begin{subfigure}[b]{0.33\linewidth}
		\centering
		\begin{tikzpicture}
		\node at (0,0){\includegraphics[width=.9\linewidth]{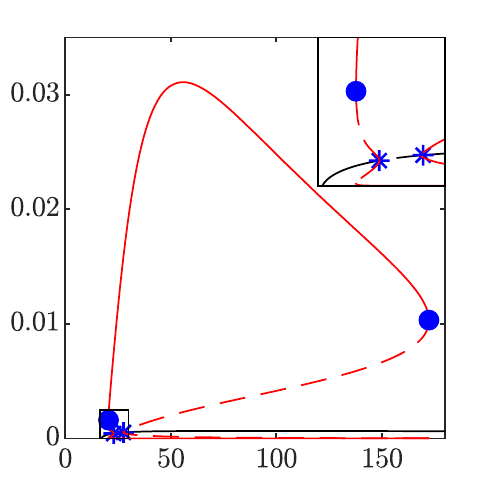}};
		\node at (0,-2.5){\small $\beta$};
		\node at (-2,0){\small $I$};
		\node[blue] at (1.2,1.05){\small $H_1$};
		\node[blue] at (1.6,1.15){\small $H_2$};
		\node[blue] at (1.25,1.65){\small $L_2$};
		\node[blue] at (1.65,-1){\small $L_1$};
		\end{tikzpicture}  
		\caption{$\nu=190$} 
		\label{fig:nu190} 
		\end{subfigure}\\
	\begin{subfigure}[b]{0.33\linewidth}
	\centering
	\begin{tikzpicture}
	\node at (0,0){\includegraphics[width=.9\linewidth]{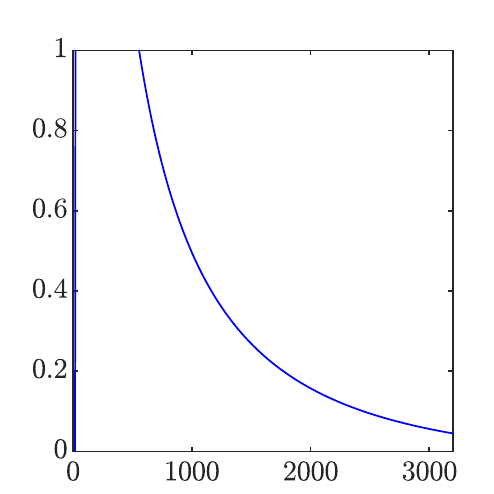}};
	\node at (0,-2.5){\small $\beta$};
	\node at (-2,0){\small $\epsilon$};
	\node at (-1.25,-1){\small$H_1$};
	\node at ( .5,-1){\small$H_2$};
	\end{tikzpicture}   
	\caption{$\nu=6$} 
	\label{fig:nu6cont} 
	\end{subfigure}%%
\begin{subfigure}[b]{0.33\linewidth}
	\centering
	\begin{tikzpicture}
	\node at (0,0){\includegraphics[width=.9\linewidth]{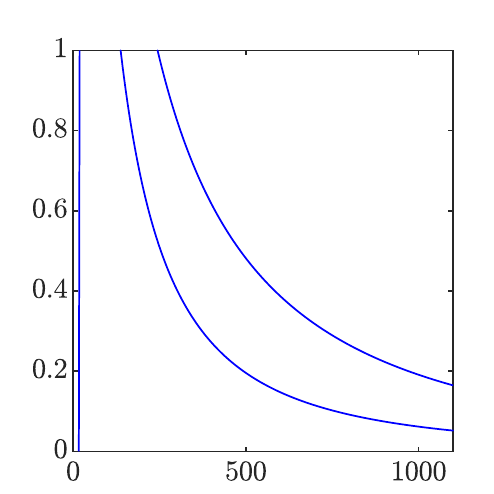}};
	\node at (0,-2.5){\small $\beta$};
	\node at (-2,0){\small $\epsilon$};
	\node at (-1.25,-1){\small$H_1$};
	\node at ( .5,-1.2){\small$H_2$};
	\node at ( .5,-.3){\small$L_1$};
	\end{tikzpicture}    
	\caption{$\nu=40$} 
	\label{fig:nu40cont}
\end{subfigure} 
\begin{subfigure}[b]{0.33\linewidth}
	\centering
	\begin{tikzpicture}
	\node at (0,0){\includegraphics[width=.9\linewidth]{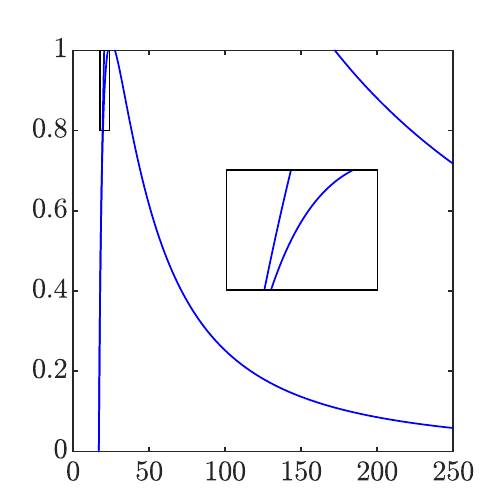}};
	\node at (0,-2.5){\small $\beta$};
	\node at (-2,0){\small $\epsilon$};
	\node at ( .5,-1.1){\small$H_2$};
	\node at ( .75,0){\small$H_1$};
	\node at ( 0.1,0.4){\small$L_2$};
	\node at ( 1.5,1.35){\small$L_1$};
	\end{tikzpicture}    
	\caption{$\nu=190$}
	\label{fig:nu190cont} 
\end{subfigure}
\caption[]{{\textbf{First row:}} one-parameter ($\beta$) bifurcation diagram for \eqref{eq:SIRWSslow}: blue stars labelled $H_1$ and $H_2$ correspond to Hopf points; blue circles labelled $L_1$ and $L_2$ correspond to Limit Point of Cycles; red lines correspond to stable (solid) and unstable (dashed) limit cycles; the stable (solid) and unstable (dashed) equilibrium point is depicted by the black line. The insets correspond to zoom-in near $\beta=17$. {\textbf{Second row:}} continuation of the Hopf and LPC points while decreasing $\epsilon$. We observe that $H_1$ (and $L_2$, when it exists) tends to $\beta=17$ as $\epsilon\to0$, while $H_2$ (and $L_1$, when it exists) diverges. The inset in (f) shows a zoom-in at the continuation of $H_1$ and $L_2$ from $\epsilon=1$ to $\epsilon=0.8$.}
\end{figure}

As before, and for ease of notation, we denote by $\beta(P)$ the value of $\beta$ corresponding to a point $P$. For each value of $\nu$ considered, we find two values $17<\beta(H_1)<\beta(H_2)$ ($17$ was the fixed value of $\gamma$ in each simulation; recall $R_0=\beta/\gamma$) corresponding to Hopf points, and we continue them in $\epsilon$, as shown in Figures~\ref{fig:nu6cont}-\ref{fig:nu190cont}. For $17 \leq \beta \leq \beta(H_1)$ the equilibrium point is stable, and there is no limit cycle. For $\beta(H_1) < \beta \leq \beta(H_2)$ the equilibrium point is unstable, and the limit cycle stable. For $\nu>\nu(GH_1)$ (resp. $\nu>\nu(GH_2)$), there is an interval (resp. there are two intervals) of values of $\beta(H_2)<\beta\leq\beta(L)$ (with $L$ a LPC, whose existence and position depend on the choice of $\nu$) for which the system exhibits bistability; eventually these two limit cycles collapse, and for $\beta>\beta(L)$ the system is characterized by a unique asymptotically stable equilibrium. Note, interestingly, that as the Hopf-Hopf bifurcation is approached, a new LPC ($L_2$ in Figure \ref{fig:nu190}) becomes visible.

We note that in the limit $\epsilon \rightarrow 0$, one has $\beta(H_1)\to 17$. This is due to the influence on the dynamics of the basic reproduction number $R_0 = \beta /\gamma$, which should remain greater than $1$ for the endemic equilibrium to exist. Related to this, one has that $\beta(L_2)\to 17$ as $\epsilon \to 0$, whenever $\nu>\nu(GH_2)$. The values $\beta(H_2)$ and $\beta(L_1)$, instead, diverge to $+\infty$ as $\epsilon \rightarrow 0$; the region corresponding to the stable limit cycle stretches, as in the $\nu$ case. Lastly, we compute a $(\xi,\nu)$-diagram and compare them for $\epsilon=1$ and $\epsilon=0.05$ in Figure \ref{fig:two_par}, as we did for $(\beta,\nu)$ in Figure \ref{fig:betanu_bifurc}.
\begin{figure}[H]\centering
	\begin{tikzpicture}
	\node at (0,0){\includegraphics[scale=1]{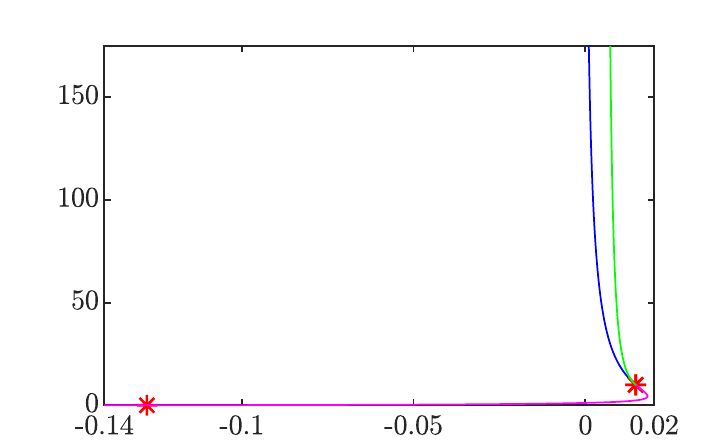}};
	\node at (-3.3,0){$\nu$};
	\node at (0,-2.5){$\xi$};
	\node at (-1,0){1};
	\node at (2.5,1){2};
	\node at (2.9,0){3};
	\node[red] at (2.25,-1.5){\small $GH_1$};
	\node[red] at (-2,-1.5){\small $GH_3$};
	\end{tikzpicture}\hfill
	\begin{tikzpicture}
	\node at (0,0){\includegraphics[scale=1]{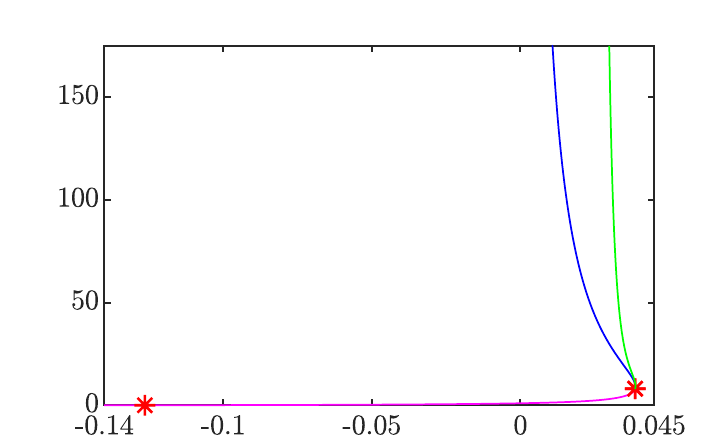}};
	\node at (-3.3,0){$\nu$};
	\node at (0,-2.5){$\xi$};
	\node at (-1,0){1};
	\node at (2.25,1){2};
	\node at (2.9,0){3};
	\node[red] at (2.25,-1.5){\small $GH_1$};
	\node[red] at (-2,-1.5){\small $GH_3$};
	\end{tikzpicture}
		\caption{Two parameter bifurcation diagram for \eqref{eq:SIRWSslow}. Left and right represent $\epsilon=1$ and $\epsilon=0.05$, respectively. The red points labelled $GH_i$ are generalised Hopf points. The blue (resp. magenta) branch is a curve of subcritical (resp. supercritical) Hopf bifurcation while the green branch corresponds to a limit point of cycles. Thus, we label the regions in the diagram according to the attractor as 1: Limit cycles, 2: Bistability, and 3: Point attractor.}	
		\label{fig:two_par}
\end{figure}

We observe in Figure \ref{fig:two_par} that not only the bifurcation diagram is stretched as $\epsilon$ decreases but also that the bistable region (region 2) is enlarged. $GH_1$ corresponds to $\xi \approx 0.0147$ for $\epsilon=1$ and to $\xi \approx 0.03871$ for $\epsilon=0.05$. Furthermore, in Figure~\ref{fig:two_par} we show the existence of another Generalized Hopf point $GH_3$ (not considered in \cite{Dafi}), corresponding to $\xi \approx -0.1276$ for $\epsilon=1$ and to $\xi \approx -0.1263$ for $\epsilon=0.05$. We do not show the $2$-parameter continuation of $GH_3$ since such a computation is not numerically feasible due to the high stiffness of the system in such parameter range. However, the previous observation suggests that all the bifurcation branches corresponding to $GH_3$ are close to each other.

The numerical analysis shown in this section supports the existence of stable limit cycles for an increasing parameter range as $\epsilon \to 0$. Nonetheless, the dependence of the behaviour of the orbits on the parameters stays the same for sufficiently small parameters. This means that as in the $\epsilon=1$ case, one still observes parameter ranges corresponding to the stability of the endemic equilibrium, and other parameter ranges corresponding to stable periodic orbits.

Based on the analysis performed so far, we can now give an interpretation of our results: first of all, the interplay between birth/death rate $\xi$ and immune boosting $\nu$ remains qualitatively similar to the one described in \cite{Dafi}, for small $\epsilon$. However, the Hopf point $H_2$ moves according to the increasing difference in the time scales involved in the respective dynamics. $H_1$ does not converge to $0$, supporting the result obtained in \cite{Dafi}, where the authors showed that, for $\nu$ small enough, the dynamics are close to a SIRS system. The main difference, however, is that as $\epsilon$ decreases the role of the parameters can drastically change due to the changes in the bifurcation diagram. For example, for $\epsilon=1$, a life expectancy of $50$ years ($\xi =0.02$) corresponds to convergence to the endemic equilibrium for all the possible values of $\nu$. In contrast, for smaller values of $\epsilon$ the same $\xi$ could correspond to stability of the limit cycle, bistability, or stability of the endemic equilibrium, depending on the value of $\nu$ (see Figure \ref{fig:two_par}). Moreover, the effect of increasing life expectancy, i.e. decreasing $\xi$, results in the transition from point stability to stability of a limit cycle, possibly passing through a region of bistability. This means that, the higher the life expectancy of a certain population, the larger the interval for $\nu$ for which a stable limit cycle exists. Biologically, this means that $\nu$ must be sufficiently small to obtain a stable endemic equilibrium, otherwise periodic epidemic outbursts turn out to be robust. 

\section{Summary and Outlook}\label{sec:conclusions}

We have analysed the behaviour of three models given as a nonstandard singularly perturbed ODE. The first two models presented in Sections \ref{SIR} and \ref{SIRS} proved to behave, under mild hypotheses on the parameters, qualitatively in the same way. In particular, their trajectories converge to the only (endemic) equilibrium in the open first quadrant, as long as the initial population of infected individuals is strictly positive. The SIRWS model, instead, proved to be much richer, with parameter regimes allowing for damped oscillations or sustained oscillations, or both.

For our analysis we have combined techniques from Geometric Singular Perturbation Theory, and in particular the entry-exit function, introduced in section \ref{entryexit}. One must point-out that GSPT is usually employed for singular perturbation problems in standard form, and just recently it has been shown that non-standard problems can also be dealt with. More precisely, GSPT allowed us to show the existence of stable limit cycles for certain parameter ranges. Based on such analysis, we further performed numerical studies and computed several insightful bifurcation diagrams, which allowed us to provide a complete qualitative description of the perturbed SIRWS model.

We concluded comparing previous results appearing in \cite{Dafi}, and extending them by taking into account the role of the (small) parameter $\epsilon$, which does not change the overall qualitatively behaviour of the system, but it does drastically change the parameter ranges corresponding to each dynamic regime. Finally, our studies show that GSPT together with numerical tools seem to be suitable to analyze and comprehend epidemiological models with vastly different rates.

Once the bifurcation structure of epidemic models is known, one can then be more ambitious and aim to not only control epidemic outbreaks better after they have occurred but even try to anticipate them using early-warning signs~\cite{OReganDrake,WidderKuehn}. Therefore, our results on bifurcation structure presented here are strongly expected to contribute to the design of these warning signs.\medskip

\textbf{Acknowledgments:} HJK would like to thank the Alexander-von-Humboldt Foundation for funding via a fellowship. CK would like to thank the VolkswagenStiftung for support via a Lichtenberg Professorship. MS would like to thank the University of Trento for supporting his research stay at the Technical University Munich. AP thanks Barbara Boldin and Odo Diekmann for useful discussions that started the interest in this project, and for sharing the computations they made; Nico Stollenwerk for useful discussion about similar but more complex models.

\bibliographystyle{plain} \small
\bibliography{biblio}

\end{document}